\documentclass{article} %

\usepackage{lipsum}
\usepackage{graphicx}
\usepackage{algcompatible}
\usepackage{amsmath} %
\usepackage{amsthm}
\usepackage{amssymb}  %
\makeatletter
\renewenvironment{proof}[1][\proofname]{%
  \par\pushQED{\qed}%
  \normalfont\topsep6pt \trivlist
  \item[\hskip\labelsep\itshape
    \ifx#1\proofname
      Proof:
    \else
      Proof of #1:
    \fi
  ]\ignorespaces
}{%
  \popQED\endtrivlist\@endpefalse
}
\makeatother

\usepackage{fullpage}

\usepackage{amsfonts}
\usepackage{graphicx}
\usepackage{mathdots}
\usepackage{amsopn}

\usepackage{amssymb,amsfonts,amsmath,amsthm}
\usepackage{dsfont}
\let\mathbb=\mathds

\usepackage[english]{babel}
\usepackage{pgfplots}
\usepackage{tikz}
\usepackage{tkz-euclide}

\usepackage{enumitem}
\usepackage[normalem]{ulem}

\definecolor{ao(english)}{rgb}{0.0, 0.5, 0.0}
\usepackage[colorlinks, citecolor = {ao(english)}, linkcolor = {ao(english)}]{hyperref} 
\usepackage[nameinlink]{cleveref}

\usepackage{tikz}
\usepackage{pgfplots}
\usetikzlibrary{arrows,shapes,trees,calc,positioning,patterns,decorations.pathmorphing,decorations.markings}
\usetikzlibrary{matrix}
\usepgfplotslibrary{groupplots}
\pgfplotsset{compat=newest}

\newtheorem{thm}{Theorem}
\crefname{thm}{Theorem}{Theorems}
\newlist{thmenum}{enumerate}{1} %
\setlist[thmenum]{label=\roman*), ref=\thethm~\roman*)}
\crefalias{thmenumi}{thm} 
\newtheorem{prop}{Proposition}
\crefname{prop}{Proposition}{Propositions}
\newlist{propenum}{enumerate}{1} %
\setlist[propenum]{label=\roman*), ref=\theprop~\roman*)}
\crefalias{propenumi}{prop}

\newtheorem{lem}{Lemma}
\crefname{lem}{Lemma}{Lemmas}
\newlist{lemenum}{enumerate}{1} %
\setlist[lemenum]{label=\roman*), ref=\thelem~\roman*)}
\crefalias{lemenumi}{lem}

\newtheorem{cor}{Corollary}
\crefname{cor}{Corollary}{Corollaries}
\newlist{corenum}{enumerate}{1} %
\setlist[corenum]{label=\roman*), ref=\thecor~\roman*)}
\crefalias{corenumi}{cor} 

\newtheorem{rem}{Remark}
\crefname{rem}{Remark}{Remarks}
\newlist{remenum}{enumerate}{1} %
\setlist[remenum]{label=\roman*), ref=\therem~\roman*)}
\crefalias{remenumi}{rem} 

\newtheorem{example}{Example}
\crefname{example}{Example}{Examples}

\crefname{ass}{Assumption}{Assumption}

\crefname{conj}{Conjecture}{Conjectures}
\newtheorem{defn}{Definition}
\crefname{defn}{Definition}{Definitions}
\newlist{defnenum}{enumerate}{1} %
\setlist[defnenum]{label=\roman*., ref=\thedefn~(\roman*.)}
\crefalias{defnenumi}{defn}

\crefname{prob}{Problem}{Problems}

\newcommand{\rk}{\textnormal{rank}}

\newcommand{\sign}{\textnormal{sign}}

\newcommand{\transp}{\mathsf{T}}

\newcommand{\floor}[1]{\lfloor #1 \rfloor}
\newcommand{\ceil}[1]{\lceil #1 \rceil}

\newcommand{\Toep}[1]{\mathcal{T}^{#1}}

\newcommand{\Hank}[1]{\mathcal{H}^{#1}}

\newcommand{\permut}[1]{\textnormal{Perm}(#1)}

\colorlet{FigColor1}{blue}
\colorlet{FigColor2}{red}
\colorlet{FigColor3}{ao(english)}
\colorlet{FigColor4}{orange}
\pgfplotsset{every axis plot/.append style={line width=1.5pt}}
\crefformat{equation}{\textup{#2(#1)#3}}
\crefrangeformat{equation}{\textup{#3(#1)#4--#5(#2)#6}}
\crefmultiformat{equation}{\textup{#2(#1)#3}}{ and \textup{#2(#1)#3}}
{, \textup{#2(#1)#3}}{, and \textup{#2(#1)#3}}
\crefrangemultiformat{equation}{\textup{#3(#1)#4--#5(#2)#6}}%
{ and \textup{#3(#1)#4--#5(#2)#6}}{, \textup{#3(#1)#4--#5(#2)#6}}{, and \textup{#3(#1)#4--#5(#2)#6}}

\Crefformat{equation}{#2Equation~\textup{(#1)}#3}
\Crefrangeformat{equation}{Equations~\textup{#3(#1)#4--#5(#2)#6}}
\Crefmultiformat{equation}{Equations~\textup{#2(#1)#3}}{ and \textup{#2(#1)#3}}
{, \textup{#2(#1)#3}}{, and \textup{#2(#1)#3}}
\Crefrangemultiformat{equation}{Equations~\textup{#3(#1)#4--#5(#2)#6}}%
{ and \textup{#3(#1)#4--#5(#2)#6}}{, \textup{#3(#1)#4--#5(#2)#6}}{, and \textup{#3(#1)#4--#5(#2)#6}}

\crefdefaultlabelformat{#2\textup{#1}#3}

\usepackage[ backend=biber, style=numeric, eprint=true,doi=false,url=false,giveninits=true,style=numeric-comp]{biblatex}
\AtEveryBibitem{\clearfield{issn}}
\title{Efficient $k$-Sign Consistency Verification of Hankel Matrices via Schur Polynomials \thanks{This work was conducted while the first author was a Jane and Larry Sherman Fellow. It was further supported by the Israel Science Foundation (grant no.2406/22).}}

\author{Christian Grussler\thanks{C. Grussler is with the Stephen B. Klein Faculty of Aerospace Engineering, Technion --- Israel Institute of Technology, 3200003 Haifa, Israel
		{\tt cgrussler@technion.ac.il}} \; and \; Tobias Damm \thanks{T. Damm is with the Department of Mathematics, RPTU University Kaiserslautern-Landau, 67663 Kaiserslautern, Germany {\tt t.damm@rptu.de}}} 

\begin{document}

\maketitle

\begin{abstract}
    We consider the problem of certifying (strict) $k$-sign consistency of a matrix, that is, whether all of its $k$-th order minors share the same (strict) sign. Although this problem is generally of combinatorial complexity, we show that for Hankel matrices it can be significantly simplified: our sufficient condition requires checking only the $k$-th order minors of a reshaped Hankel matrix with $k$ rows. Remarkably, when applied to the Hankel operator, this sufficient condition is also necessary. Comparable results were known only in the setting of (strictly) $k$-positive Hankel matrices and operators, in which all minors of order up to $k$ have the same (strict) sign.

More concretely, we derive a formula expressing the $k$-th order minors of Hankel matrices as nonnegative integer linear combinations of $k$-th order minors with consecutive row indices. Our derivation uses Schur polynomial theory to show that the $k$-th order minors of any matrix are nonnegative integer linear combinations of row-consecutive $k$-th order minors, meaning minors formed from distinct columns whose consecutive row indices need not coincide across columns. For Hankel matrices, these minors coincide --- up to sign changes arising from column swaps --- with the usual $k$-th order minors with consecutive row indices. Our main result then follows by showing that the sum of certain signed nonnegative integer coefficients equals the corresponding Littlewood--Richardson coefficients. In our problem, the nonnegativity of these coefficients ensures that negatively signed column permutations are cancelled by positively signed ones. Our results also extend naturally to Toeplitz matrices and operators, and we present a partial analogue for circulant matrices.
\end{abstract}

\section{Introduction}

Total positivity theory of linear operators has attracted researchers from various fields over the past centuries. Positivity in the context of nonnegative matrices, i.e., matrices that map the cone of nonnegative vectors to itself, appears in probability theory (Markov chains), economics and biology (compartmental models), statistics (stochastic processes), graph theory (adjacency matrices), etc., where the restriction to nonnegative entries reflects the natural impossibility of negative probabilities, populations, flows, and so on (see, e.g., \cite{berman1979nonnegative,berman1989nonnegative,pinkus2009totally,karlin1968total}).

The framework of total positivity, however, also encompasses mappings with the more general invariance property of variation diminishing/bounding, i.e., reducing or preserving the number of sign changes (which equals zero for nonnegative vectors). This generalization of nonnegative matrix theory underlies, implicitly and explicitly, important developments as early as Descartes' Rule of Signs (1637) \cite{descartes1886geometrie}, the related Routh--Hurwitz criterion \cite{holtz2003hermite}, vibration mechanics \cite{gantmacher1950oszillationsmatrizen}, spline theory \cite{karlin1968total,gasca2013total}, geometric modelling \cite{gasca2013total,hagen1991variational}, and the omnipresent concepts of convexity and monotonicity \cite{grussler2025discrete,karlin1968total}. In recent years, this broader viewpoint has received increasing interest in the study of system behaviors (see, e.g., \cite{margaliot2018revisiting,grussler2020variation,weiss2019generalization2,weller2020strongly,grussler2025discrete,grussler2024system,tong2025selfsustained}), as well as in low-complexity modelling \cite{marmary2025tractabledownfallbasispursuit,grussler2020balanced}.

The term ``total positivity'' stems from the characterization of such linear mappings via the sign consistency of their minors \cite{karlin1968total,grussler2024system}. A matrix is called (i) \emph{(strictly) $k$-sign consistent} if all its $k$-th order minors share the same (strict) sign, (ii) \emph{(strictly) $k$-positive} if all its minors of order up to $k$ are (strictly) positive, and (iii) (strictly) totally positive if it is (strictly) $k$-positive for all $k$. As the number of minors grows combinatorially with the matrix dimensions, a major obstacle to the utility of this framework lies in efficient certification. Fortunately, in the case of (strict) $k$-positivity, it is under mild assumptions sufficient to check the (strict) positivity of consecutive minors of order up to $k$, i.e., minors formed from consecutive column and row indices \cite{karlin1968total,grussler2020variation,grussler2024system}. This not only significantly reduces the complexity of verifying the property, but when applied to Hankel matrices and operators also yields an exact and tractable characterization \cite{fallat2017total,grussler2020variation,pinkus2009totally}.

In this work, we show that similar simplifications can be obtained for (strictly) $k$-sign consistent Hankel matrices and operators. Our main contribution is an explicit formula that expresses each $k$-th order minor of a Hankel matrix as a nonnegative integer linear combination of $k$-th order minors from a reshaped Hankel matrix with $k$ rows. As a consequence, it is sufficient to verify (strict) sign consistency only among these $k$-th order minors to conclude (strict) $k$-sign consistency of the original Hankel matrix. In the case of the infinite-dimensional Hankel operator, where all $k$-th order minors of the reshaped Hankel matrix are also $k$-th order minors of the operator, this provides a necessary and sufficient characterization. Our result, in particular, facilitates the use of \cite{pena_matrices_1995}, which characterizes $k$-sign consistency of a matrix with $k$ rows via the total positivity of an associated transformed matrix. Finally, since column-order inversion of a Hankel matrix yields a Toeplitz matrix, our results also naturally extend to the Toeplitz case and partially to circulant matrices.

In our derivation, we use tools from algebraic combinatorics \cite{stanley2023enumerative} such as Schur polynomials, Kostka numbers, and Littlewood--Richardson coefficients. Specifically, we first utilize Schur polynomials to show that $k$-th order minors of a matrix can be expressed as nonnegative combinations of row-consecutive $k$-th order minors, i.e., minors obtained by selecting $k$ consecutive rows in each column, without requiring the row indices to coincide across columns. For Hankel matrices, the structure forces all such row-consecutive minors to coincide (up to sign) with the standard $k$-th order minors formed from the first $k$ rows of an extended/reshaped Hankel matrix. We then employ the definition of Littlewood--Richardson coefficients to show that all signed contributions arising from column permutations cancel, leaving only nonnegative coefficients in front of the consecutive-row minors. This yields the desired formula and the resulting efficient certificates for $k$-sign consistency.

The remainder of the paper is organized as follows: after extensive preliminaries in \Cref{sec:prelim} on total positivity and enumerative combinatorics, we derive and illustrate our main results in \Cref{sec:results}, before drawing conclusions in \Cref{sec:conculsion}.

\section{Preliminaries} \label{sec:prelim}
\subsection{Set Notations \& Sequences}
We write $\mathds{Z}$ for the set of integers and $\mathds{R}$ for the set of reals, with  $\mathds{Z}_{\ge 0}$ and $\mathds{R}_{\ge 0}$ standing for the respective subsets of nonnegative elements; the corresponding notation with strict inequality is also used for positive elements. For $k,l \in \mathds{Z}$, we use $(k:l) = \{k,k+1,\dots,l\}$ if $k \leq l$. For $r \in \mathds{Z}_{>0}$, we define the following sets of $r$-tuples: 
\begin{enumerate}
    \item the set of all $r$-tuples on $(k:l)$ is given by $(k:l)^r$.
    \item the set of increasing $r$-tuples on $(1:n)$ by 
\begin{equation*}
	\mathcal{I}_{n,r} := \{ v \in (1:n)^r: \; v_1 < v_2 < \dots  < v_r \},
\end{equation*}
\item the set of weakly decreasing $r$-tuples on $(0:n-1)$ by  
\begin{equation*}
    \mathcal{D}_{n,r} := \{ v \in (0:n-1)^r: \; v_1 \geq v_2 \geq \dots \geq v_r \}.
\end{equation*}
\item the set of weakly decreasing $r$-tuples associated with $\lambda \in  \mathcal{D}_{n,r}$ is given by 
\begin{equation*}
    \mathcal{D}_{\lambda} := \{ v \in  \mathcal{D}_{|\lambda|,r}: \sum_{i=1}^n (\lambda_i- v_i) = 0 \},
\end{equation*}
where $|\lambda| := \sum_{i=1}^n \lambda_i$.
\end{enumerate}
The $r$-tuple of all ones is denoted by $\mathbf{1}_r$. For two $r$-tuples $\lambda$ and $\mu$, we define their addition by $\lambda+\mu = (\lambda_1 + \mu_1,\dots,\lambda_r + \mu_r)$ 
and analogously the difference $\lambda-\mu$. We say that $\lambda \geq 0$ if all elements in $\lambda$ are nonnegative. Further, we use $\lambda^\downarrow$ and $\lambda^{\uparrow}$ to denote the $r$-tuple that results from sorting $\lambda$ in non-increasing order and non-decreasing order, respectively, and define $\max(\lambda) := (\lambda^{\downarrow})_1$. For $\lambda, \mu \in \mathcal{D}_{n,r}$, $\lambda \trianglerighteq \mu$ stands for $\mu$ preceding $\lambda$ in dominance order, i.e., $\sum_{i=1}^k \lambda_i \geq \sum_{i=1}^k \mu_i$ for all $k \in (1:r)$. %

The set of all sequences with indices in $\mathds{Z}$ and values in $\mathds{C}$ are defined by $\mathds{C}^\mathds{Z}$ and, similarly, we define $\mathds{C}^{\mathds{Z}_{\geq 0}}$, $\mathds{C}^{\mathds{Z}_{> 0}}$ and $\mathds{C}^{\mathds{Z}_{< 0}}$. $x \in \mathds{C}^\mathds{Z}$ is called $T$-periodic, if $x_{t} = x_{t+T}$ for all $t \in \mathds{Z}$ and the set of all $T$-periodic sequences is denoted by $\ell_\infty(T)$.  

\subsection{Matrices}
For matrices $X = (x_{ij}) \in \mathds{C}^{m \times n}$, the submatrix with rows $\mathcal{I} \subset (1:m)$ and columns $\mathcal{J} \subset (1:n)$ is written as $X_{(\mathcal{I},\mathcal{J})}$, where we also use the notions
$X_{(:,\mathcal{J})} := X_{((1:m),\mathcal{J})}$, $X_{(\mathcal{I},:)} := X_{(\mathcal{I},(1:n))}$. In the case of subvectors, we simply write $x_\mathcal{I}$. The determinant of $X \in \mathds{C}^{n \times n}$ is denoted by $|X|$. A \emph{(consecutive) $j$-minor} of $X \in \mathds{C}^{n \times m}$ is a minor that is constructed of (consecutive) $j$ columns and $j$ rows of $X$, i.e., $|X_{p,q}|$ with $p \in \mathcal{I}_{m,r}$, $q \in \mathcal{I}_{n,r}$ (where $p = (p_1:p_r)$ and $q = (q_1:q_r)$). The set of $j$-consecutive minors with column/row indices $(1:r)$ are said to be the $j$-\emph{column}/\emph{row} \emph{initial minors}. Further, for $p \in (1:n-r)^r$ and $q \in \mathcal{I}_{n,r}$ we say that
\begin{equation*}
    \begin{vmatrix}
            x_{p_{1}+1,q_1} & \dots & x_{p_{r}+1,q_r}\\
            \vdots &  & \vdots\\
            x_{p_{1}+r,q_1} & \dots & x_{p_r+r,q_r}
        \end{vmatrix}
\end{equation*}
is a \emph{row-consecutive $r$-minor} for $X$.  

\subsubsection{Structured Matrices}
The \emph{transposed Vandermonde matrix} $V_M(x) \in \mathds{C}^{M \times n}$ of length $M$ and in variables $x \in \mathds{C}^n$ is defined by
\begin{equation*}
   V_M(x) := \begin{bmatrix}
       1 & 1 & \dots & 1 \\
       x_1 & x_2 & \dots & x_n \\
       \vdots & \vdots & & \vdots \\
       x_1^{M-1} & x_2^{M-1} & \dots & x_n^{M-1} 
   \end{bmatrix}\,.
\end{equation*}
For $g \in \mathds{C}^{\mathds{Z}}$ and $t\in \mathds{Z}$ and $m,n \in \mathds{Z}_{\geq0}$ we define the \emph{Hankel matrices}
    \begin{align}
	H^g(t,m,n) &:= \begin{bmatrix}
	g_t & g_{t+1} & \dots & g_{t+n-1}\\
	g_{t+1} & g_{t+2} & \dots & g_{t+n}\\
	\vdots & \vdots &    & \vdots \\
	g_{t+m-1}   & g_{t+j} & \dots & g_{t+m+n-2}\\
	\end{bmatrix}
	\end{align}
and the \emph{Hankel operator} $\mathcal{H}^g: \mathds{C}^{\mathds{Z}_{< 0}} \to \mathds{C}^{\mathds{Z}_{\geq 0}}$ by
	\begin{align}
	 \label{eq:def_hank_disc}
	\Hank{g} u &:= \begin{bmatrix}
	    g_1 & g_2 & g_3 & \dots \\
        g_2 & g_3 & g_4 & \dots \\
        g_3 & g_4 & g_5 & \dots \\
        \vdots & \vdots & \vdots & \ddots 
	\end{bmatrix} \begin{bmatrix}
	    u_0\\
        u_{-1}\\
        u_{-2}\\
        \vdots 
	\end{bmatrix} \,.%
	\end{align}
Similarly, for $t \in \mathds{Z}$ and $m,n \in \mathds{Z}_{\geq 0}$, we define the Toeplitz matrices
\begin{equation}
    T^g(t,m,n) := \begin{bmatrix}
	g_{t} & g_{t-1} & \dots & g_{t-n+1}\\
	g_{t+1} & g_{t} & \dots & g_{t-n+2}\\
	\vdots & \vdots &  & \vdots \\
	g_{t+m-1}  & g_{t+m-2} & \dots & g_{t+m-n}\\
	\end{bmatrix}
\end{equation}
and the \emph{Toeplitz operator} $\mathcal{T}^g: \mathds{C}^{\mathds{Z}_{\geq 0}} \to \mathds{C}^{\mathds{Z}_{\geq 0}}$ by 
\begin{align}
	 \label{eq:def_toep_disc}
	\Toep{g} u &:= \begin{bmatrix}
	    g_{0} & g_{-1} & g_{-2} & \dots \\
        g_1 & g_0 & g_{-1} & \dots \\
        g_2 & g_1 & g_0 & \dots \\
        \vdots & \vdots & \vdots & \ddots 
	\end{bmatrix} \begin{bmatrix}
	    u_0\\
        u_{1}\\
        u_{2}\\
        \vdots 
	\end{bmatrix}\,. %
	\end{align}
Finally, for $g \in \ell_\infty(T)$, we define the \emph{circulant matrix}
\begin{equation}
    C^g = \begin{bmatrix}
        g_0 & g_{T-1} & \dots & g_1\\
        g_1 & g_{0} & \dots & g_2\\
        \vdots & \vdots  & & \vdots\\
        g_{T-1} & g_{T-2} & \dots & g_0
    \end{bmatrix} = T^g(0,T,T)\,.
\end{equation}
\subsubsection{Total Positivity}
The framework of total positivity characterizes linear mappings with the so-called variation diminishing and bounding properties, which have been utilized in interpolation theory \cite{Schoenberg1951polya,karlin1968total}, systems theory \cite{gantmacher1950oszillationsmatrizen,margaliot2018revisiting,weiss2019generalization2,grussler2020variation,grussler2020balanced,grussler2024system}, computer vision \cite{lindeberg1990scale}, sparse optimization \cite{marmary2025tractabledownfallbasispursuit}, etc. At the heart of this framework lie the following matrix notions.

\begin{defn}
Let $X \in \mathds{C}^{m \times n}$, and $k \in (1: \min \{m,n\})$. Then, $X$ is called 
\begin{itemize}
    \item \emph{(strictly) $k$-sign consistent} if all $k$-minors of $X$ share the same (strict) sign. 
    \item \emph{(strictly) $k$-positive} if all $j$-minors of $X$ are nonnegative (positive) for all $j \in (1:k)$. In case of $k = \min\{m,n\}$, $X$ is also called \emph{(strictly) totally positive}.
\end{itemize}

\end{defn}
While checking (strict) total positivity is generally of combinatorial complexity, under some strictness assumptions, it can be verified by only checking a polynomial amount of $j$-minors (see, e.g., \cite[Proposition~22 \& 23]{grussler2024system}).
\begin{prop}\label{prop:row_col_initial_minors_strict_tot_pos}
     Let $X \in \mathds{C}^{m \times n}$. Then the following hold:
     \begin{enumerate}
         \item $X$ is strictly totally positive if and only if all row and column initial minors of $X$ are positive.
         \item If $X$ is such that all $j$-row and $j$-column initial minors of $X$ are positive for all $j \in (1:\min\{n-1,m-1\})$ and nonnegative for $j = \min\{n,m\}$, then $X$ is totally positive and all minors of order less than $\min\{n,m\}$ are positive. 
     \end{enumerate}  
\end{prop}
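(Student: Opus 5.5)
Both statements will be derived from the classical Fekete-type reduction: total positivity of a matrix follows from positivity of its minors built on consecutive rows and columns. Concretely, I will use the standard lemma (Fekete, see \cite{karlin1968total,pinkus2009totally}): if all consecutive $(j-1)$-minors of $X$ are positive and all consecutive $j$-minors are positive (resp.\ nonnegative), then all $j$-minors are positive (resp.\ nonnegative). Its proof repeatedly applies the Sylvester/Desnanot--Jacobi identity to fill gaps in the row and column index sets, one index at a time. So the plan has two steps. First, show that positivity of the row and column initial minors forces positivity of all consecutive minors. Second, invoke Fekete's lemma inductively on the order $j$.

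For the first step in part (i), I will induct on the order $j$ and on the position of the consecutive block. Let $\alpha=(i:i+j-1)$ and $\beta=(l:l+j-1)$. If $i=1$ or $l=1$, the minor $|X_{(\alpha,\beta)}|$ is a row or column initial minor and hence positive. Otherwise I apply the Desnanot--Jacobi identity to the $(j+1)\times(j+1)$ consecutive submatrix with rows $(i-1:i+j-1)$ and columns $(l-1:l+j-1)$:
\begin{equation*}
|X_{(\alpha,\beta)}|\,|X_{(\alpha',\beta')}| = |X_{(\alpha^-,\beta^-)}|\,|X_{(\alpha,\beta)}|' \;-\; \cdots
\end{equation*}
The bookkeeping here is cleaner if I write the version that isolates the lower-right $j$-minor. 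In that version, the lower-right $j$-minor times the central $(j-1)$-minor equals the full $(j+1)$-minor times the $(j-1)$-minor plus the product of the two off-diagonal $j$-minors, with suitable indexing. Every other minor in the identity either has a smaller top-left index (closer to the initial ones) or has order $j-1$. Ordering the induction by $j$ and then by $i+l$ therefore yields positivity of all consecutive minors. The one care point is that the identity needs the $(j+1)$-minor, which is of higher order. To avoid this, I will instead move the window one step toward the border in only one direction at a time, using the identity where the unknown minor appears alone as a ratio of known positive quantities, exactly as in \cite[Proposition~22]{grussler2024system}. With all consecutive minors positive, Fekete's lemma gives strict total positivity. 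The converse direction of (i) is trivial.

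For part (ii), the same induction runs verbatim for orders $j\le \min\{m,n\}-1$, since all hypotheses there are strict. This gives positivity of every minor of order less than $\min\{m,n\}$. For the top order $j=\min\{m,n\}=:p$, say $p=m$, every $p$-minor uses all rows and consecutive columns $(l:l+m-1)$. I will show these are nonnegative by the same one-step identity. The identity expresses the shifted $m$-minor times a positive $(m-1)$-minor as a sum of a nonnegative $m$-minor times a positive $(m-1)$-minor plus a nonnegative product, so no division by a possibly-zero $m$-minor occurs. Fekete's lemma in its nonnegative form (order $p$ nonnegative given order $p-1$ strictly positive) then yields total positivity.

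I expect the main obstacle to be arranging the Desnanot--Jacobi identities so that the unknown minor is always multiplied by a strictly positive lower-order minor. Only then can its sign be concluded, and this matters especially at the top order in (ii), where only nonnegativity is available.
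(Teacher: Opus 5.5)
Your proposal has a genuine gap in its central step. The paper gives no proof of its own here and only refers to \cite[Propositions~22 \& 23]{grussler2024system}. Deferring your key step to ``exactly as in \cite[Proposition~22]{grussler2024system}'' therefore amounts to citing the result you are proving.

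\textbf{The first step fails as organised.} Every Desnanot--Jacobi identity that isolates a non-initial consecutive $j$-minor contains the $(j+1)$-minor of the enclosing block. The ``one direction at a time'' alternative does not exist with usable signs. In the three-term (Fekete) identity on a $(j+1)\times j$ block, a consecutive minor is the deletion of an extreme row, so it can only be isolated with a minus sign, and next to a non-consecutive minor. In fact no scheme that is upward in $j$ can work, because positivity of $j$-minors does not follow from initial minors of order at most $j$. For $j=1$ these are the first row and column, which say nothing about $x_{22}$: take $X=\begin{bmatrix}1&1\\1&-1\end{bmatrix}$. One needs $x_{22}x_{11}=|X_{((1:2),(1:2))}|+x_{12}x_{21}$.

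The standard repair (Gasca--Pe\~na) is to induct on the lower-right corner $(e,f)$ of the consecutive block, i.e.\ on $e+f$. For a fixed corner you then \emph{descend} in the order, starting at $j=\min\{e,f\}$, where the block is automatically a row or column initial minor. With $i=e-j+1\ge 2$ and $l=f-j+1\ge 2$,
\begin{equation*}
|X_{((i:e),(l:f))}|\,|X_{((i-1:e-1),(l-1:f-1))}| = |X_{((i-1:e),(l-1:f))}|\,|X_{((i:e-1),(l:f-1))}| + |X_{((i-1:e-1),(l:f))}|\,|X_{((i:e),(l-1:f-1))}| .
\end{equation*}
Here the $(j+1)$-minor has the same corner and higher order, so it is already treated in the descent. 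The remaining minors have corners $(e-1,f-1)$, $(e-1,f)$ and $(e,f-1)$, and an empty minor is read as $1$.

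\textbf{Part (ii) inherits the problem.} You claim the argument for orders below $p:=\min\{m,n\}$ ``runs verbatim since all hypotheses there are strict''; this is false. Positivity of lower-order minors genuinely depends on the top-order ones: in the $2\times 2$ case above, $|X|\ge 0$ is needed to get $x_{22}>0$. In the corrected induction, the first descent step at corners with $\min\{e,f\}=p$ uses a top-order minor that is only nonnegative. Strict positivity survives because the second product on the right-hand side is strictly positive.

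Conversely, what you set out to prove at the top order is already a hypothesis. If $p=m$, the consecutive minors $|X_{((1:m),(l:l+m-1))}|$ are exactly the $m$-row initial minors. Moreover, the identity you sketch does not hold with the stated signs: in the three-term identity on $m$ rows and $m+1$ consecutive columns, the shifted minor again comes with a minus sign.

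Your Fekete step is fine, including the nonnegative version. The three-term identity reduces dispersion using only $p$-minors and strictly positive $(p-1)$-minors, so it applies once all minors of order $p-1$ are known to be positive.
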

Using repeated Dodgson condensation as in \cite{carter2021complexity}, it can be shown that the total complexity of \cref{prop:row_col_initial_minors_strict_tot_pos} sums up to $\mathcal{O}(m^2n)$ if $m \leq n$. Analogously to the proof of \cite[Theorem~2.3]{fallat2017total}, \cref{prop:row_col_initial_minors_strict_tot_pos} in conjunction with \cite[Proposition~8]{grussler2020variation} leads to the following (strict) $k$-positivity certificate. 
\begin{prop}
    \label{prop:consecutive_old}
	Let $X \in \mathds{C}^{m \times n}$, $k \leq \min \{m,n\}$, be such that
	\begin{enumerate}
	    \item all $j$-row and $j$-column initial minors of $X$ are positive for $j \in (1:k-1)$
	    \item all consecutive $k$-minors of $X$ are
	    nonnegative (positive).
	\end{enumerate}
	Then, $X$ is (strictly) $k$-positive. In the strict case, these are also necessary conditions.  
\end{prop}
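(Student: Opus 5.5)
The plan is to reduce to \cref{prop:row_col_initial_minors_strict_tot_pos} applied to every $k$-row (equivalently $k$-column) submatrix of $X$, following \cite[Theorem~2.3]{fallat2017total}. The first step is to show that, under condition 1, \emph{all} $j$-minors of $X$ with $j \in (1:k-1)$ are positive. The argument goes by induction on $j$. For $j=1$, the initial $1$-minors are the entries of the first row and the first column, and these are positive by assumption. To propagate positivity to the remaining entries, and more generally to all $j$-minors, I would use the Desnanot--Jacobi (Dodgson) identity. For a $(j+1)\times(j+1)$ consecutive block,
\begin{equation*}
|A|\,|A_{(2:j),(2:j)}| = |A_{(1:j),(1:j)}|\,|A_{(2:j+1),(2:j+1)}| - |A_{(1:j),(2:j+1)}|\,|A_{(2:j+1),(1:j)}|.
\end{equation*}
Read the other way around, this expresses the ``south-east'' $j$-minor through minors lying further north-west, and the quantities it is divided by are already known to be positive. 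Sweeping through $X$ row by row and column by column then shows that all consecutive $j$-minors are positive for $j\le k-1$. The standard passage from consecutive minors to all minors of the same order (\cite[Proposition~8]{grussler2020variation}, or the Fekete-type argument) then gives positivity of every $j$-minor, $j\le k-1$.

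The second step treats order $k$. Fix any $k\times n$ row-submatrix $Y = X_{(\mathcal{I},:)}$ with $\mathcal{I}\in\mathcal{I}_{m,k}$; by the first step all its minors of order $<k$ are positive. I would then invoke the consecutive-minor criterion \cite[Proposition~8]{grussler2020variation}. For a matrix whose minors of order $<k$ are positive, it says that nonnegativity (positivity) of the consecutive $k$-minors implies nonnegativity (positivity) of all $k$-minors. The proof is the Sylvester/Dodgson-based shifting argument, which replaces one row or column index at a time by a neighbouring one while dividing only by strictly positive $(k-1)$-minors. The main obstacle is that, for a general row set $\mathcal{I}$, the consecutive $k$-minors of $Y$ are not consecutive $k$-minors of $X$. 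I therefore plan to run the index-shifting argument directly on $X$. I would first establish nonnegativity (positivity) of $k$-minors with consecutive columns and arbitrary rows, by closing row gaps one at a time via the three-term Plücker relation
\begin{equation*}
|X_{(\alpha\cup\{a\}),\beta}|\,|X_{(\gamma\cup\{b,c\}),\beta'}| = \dots,
\end{equation*}
in which every other term has a known sign and a strictly positive $(k-1)$-minor appears as the divisor. I would then do the same for the columns. In the non-strict case, a limiting/perturbation argument may be needed wherever a zero appears, and I expect this bookkeeping to be the delicate point.

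Combining both steps gives that all $j$-minors, $j\le k$, are nonnegative (positive), which is (strict) $k$-positivity. For necessity in the strict case, note that if $X$ is strictly $k$-positive, then all minors of order $\le k$ are positive. In particular, the initial minors of order $<k$ and the consecutive $k$-minors are positive, so conditions 1 and 2 hold trivially.
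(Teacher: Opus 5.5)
\textbf{Gap in your first step.} The claim that condition 1 alone makes all $j$-minors with $j\le k-1$ positive is false. For $k=2$, the matrix $\begin{bmatrix} 1 & 1\\ 1 & -1\end{bmatrix}$ has positive initial $1$-minors (first row and first column), yet its $(2,2)$ entry is negative. The error is in how you read the Desnanot--Jacobi identity. Solved for the south-east minor, it reads
\begin{equation*}
|A_{(2:j+1),(2:j+1)}| = \frac{|A|\,|A_{(2:j),(2:j)}| + |A_{(1:j),(2:j+1)}|\,|A_{(2:j+1),(1:j)}|}{|A_{(1:j),(1:j)}|}.
\end{equation*}
Besides positive divisors, you therefore need $|A|\ge 0$, i.e.\ the sign of a consecutive $(j+1)$-minor. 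Condition 1 says nothing about this minor. An induction running upward in $j$ has also not yet controlled it at the moment you treat order $j$.

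\textbf{How to repair it.} Condition 2 must be used already at this stage, and the induction must run over position rather than order. Induct on $i+l$, where $(i,l)$ is the north-west corner of a consecutive block. For a fixed corner, induct on $j=1,\dots,k-1$, with the empty minor taken as $1$. Corners with $i=1$ or $l=1$ give exactly the initial minors. Now take the $j$-minor at corner $(i+1,l+1)$:
\begin{itemize}
\item the three $j$-minors on the right sit at corners $(i,l)$, $(i,l+1)$, $(i+1,l)$, all of smaller sum;
\item the $(j-1)$-minor sits at the same corner with lower order;
\item $|A|$ is the $(j+1)$-minor at corner $(i,l)$. It is positive by induction if $j+1\le k-1$, and nonnegative by condition 2 if $j+1=k$.
\end{itemize}
Since $|A_{(1:j),(2:j+1)}|\,|A_{(2:j+1),(1:j)}|>0$, the south-east minor is positive even when $|A|=0$. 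Hence all consecutive minors of order $\le k-1$ are positive, also in the non-strict case, and Fekete's lemma extends this to all minors of these orders. This is the Dodgson-condensation mechanism behind the initial-minor criterion \cref{prop:row_col_initial_minors_strict_tot_pos}. The paper combines that criterion, via the cited argument of Fallat et al., with a consecutive-minor criterion.

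\textbf{Your second step.} Once step one is fixed, your second step goes through. You can run it on $X$ directly, so the detour through row submatrices $Y$ is unnecessary. No limiting or perturbation argument is needed either. The divisors in the three-term identity are $(k-1)$-minors, which are now strictly positive. So each gapped $k$-minor is directly a nonnegative (positive) combination of $k$-minors of smaller spread.
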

In the special case of Hankel matrices, one can use \cref{prop:consecutive_old} to arrive at the following $k$-positivity certificate (see, e.g., \cite[Theorem~3.2]{fallat2017total} or \cite[Theorem~4]{grussler2020variation}).
\begin{cor}\label{lem:k_pos_Hankel}
    For $g \in \mathds{C}^{\mathds{Z}}$, $1 \leq k \leq \min\{M,N\}$ and $t \in \mathds{Z}$, the following are equivalent:
\begin{enumerate}
    \item $H^g(t,M,N)$ is (strictly) $k$-positive. 
    \item All consecutive $j$-minors of $H^g(t,M,N)$, $j \in (1:k)$, are nonnegative (positive). 
\end{enumerate}
\end{cor}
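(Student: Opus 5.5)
The implication (i)$\Rightarrow$(ii) holds trivially: consecutive $j$-minors are in particular $j$-minors. All of the work is in (ii)$\Rightarrow$(i).

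For the strict case I would apply \cref{prop:consecutive_old} with $X = H^g(t,M,N)$. Condition 2 of that proposition is the assumption on consecutive $k$-minors. For condition 1, note that every $j$-row initial minor and every $j$-column initial minor with $j<k$ is a consecutive $j$-minor. Indeed, a row initial minor uses rows $(1:j)$ and some columns $q$, and I must show it is positive for every $q\in\mathcal{I}_{N,j}$, not only for consecutive $q$. To get this I would proceed by induction on $j$, following the classical Fekete-type argument behind \cref{prop:consecutive_old}. Suppose all consecutive minors of order at most $j$ are positive. Then the $M\times N$ matrix is strictly $j$-positive. In the step, applying \cref{prop:consecutive_old} with $k=j$ yields strict $j$-positivity from the positivity of consecutive minors of orders up to $j$, using the previous induction level for condition 1. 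The base case $j=1$ is immediate, since all entries $g_{t+i+l-2}$ are consecutive $1$-minors. Thus strict $(k-1)$-positivity, and hence condition 1, follows. One more application of \cref{prop:consecutive_old} with order $k$ gives strict $k$-positivity.

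For the non-strict case I would reduce to the strict one by perturbation, and this is where the Hankel structure matters. For a general matrix, nonnegativity of consecutive minors does not imply $k$-positivity. For Hankel matrices I would use the structure of sequences with $H$ nonnegative consecutive minors. The approach I would try first is to perturb $g$ to $g^\varepsilon_s := g_s + \varepsilon \int_0^1 x^{s} \, d\mu(x)$, or more simply to add $\varepsilon\sum_{i} c_i x_i^{s}$ with distinct positive nodes $x_i$. This adds $\varepsilon$ times a strictly totally positive Hankel matrix, namely a sum of rank-one moment terms $V_M(x)DV_N(x)^\transp$ with enough nodes. By Cauchy--Binet every minor of $V_M(x)DV_N(x)^\transp$ is positive.

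The key difficulty is that consecutive minors of a sum are not simply sums of consecutive minors, so strict positivity of the perturbed consecutive minors is not automatic. I would handle this with the Cauchy--Binet expansion of $|(H^g+\varepsilon H^{h})_{(p,q)}|$ as a polynomial in $\varepsilon$. Its constant term is the nonnegative minor of $H^g$ and its top term is a positive minor of $H^h$, but the cross terms mix minors of lower order that are not consecutive. This is the main obstacle.

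I would instead fall back on the argument in \cite[Theorem~3.2]{fallat2017total}, which is presumably what is intended here. There, nonnegativity of consecutive minors of a Hankel matrix is first shown to give nonnegativity of all minors with consecutive rows (or columns). The tool is Sylvester-type determinantal identities exploiting the shift-invariance $H^g(t+1,\cdot,\cdot)$ = shifted submatrix. After that, the general column-consecutive-to-arbitrary step uses the standard Karlin extension argument. Degenerate cases, where some consecutive minor vanishes, would be treated via the shift structure, showing that a zero consecutive minor propagates to a rank drop. So in the non-strict direction I would essentially cite the references \cite{fallat2017total,grussler2020variation} stated before the corollary. The strict direction is proved as above.
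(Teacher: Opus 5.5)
Your strict case is correct and follows the route the paper indicates (\cref{prop:consecutive_old}). The induction, however, rests on a misreading of the definition. By the paper's definition, the $j$-row/column initial minors are \emph{consecutive} $j$-minors: rows $(1:j)$ with consecutive columns, or columns $(1:j)$ with consecutive rows. They are not minors with arbitrary columns $q \in \mathcal{I}_{N,j}$. Hence condition~1 of \cref{prop:consecutive_old} is already contained in (ii). The strict case is a one-line application and uses no Hankel structure at all. Your induction is harmless but superfluous.

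The genuine gap is the non-strict direction, which is where all the content lies. You rightly abandon the perturbation, since positivity of the perturbed consecutive minors does not follow from the Cauchy--Binet expansion. You then only cite \cite{fallat2017total,grussler2020variation}, together with a description of their argument that you have not checked; that description should not be presented as a proof. The paper also only cites these works. You are right that \cref{prop:consecutive_old} alone cannot give this direction, because condition~1 demands \emph{strict} positivity of the lower-order initial minors.

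To make the step self-contained, use that every consecutive $j$-minor of a Hankel matrix is a Hankel determinant $D_j(m) := |H^g(m,j,j)|$. The Desnanot--Jacobi identity then reads
\begin{equation*}
    D_j(m)\,D_j(m+2) - D_j(m+1)^2 = D_{j+1}(m)\,D_{j-1}(m+2) \geq 0, \qquad j \le k-1, \quad D_0 := 1.
\end{equation*}
So for $j \le k-1$ each nonnegative sequence $m \mapsto D_j(m)$ is log-convex, and a positive interior value forces positive neighbours. Consequently, either $D_j>0$ on its whole range, or $D_j$ vanishes at every interior index. If the first alternative holds for all $j \le k-1$, then all initial minors of order at most $k-1$ are positive, and \cref{prop:consecutive_old} gives $k$-positivity directly. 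What remains is the rigid degenerate case, in which the interior Hankel determinants of some order $j_0 \le k-1$ vanish identically. This case must be analysed, or genuinely taken from the references, before the non-strict equivalence is established.
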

\cref{prop:consecutive_old} can also be used to verify $n$-sign consistency via the following bijection from \cite{pena_matrices_1995}.
\begin{lem}\label{lem:pena_bijection}
    Let $X \in \mathds{C}^{m \times n}, \; m >n$, be such that $|X_{((1:n),(1:n))}| \ne 0$ and 
    \begin{equation*}
       C :=  X_{((m-n+1:m),(1:n))} \left(X_{((1:n),(1:n))}\right)^{-1} K_n, 
    \end{equation*}
    where $K_n = (k_{ij}) \in \mathds{R}^{n \times n}$ is defined by
    \begin{equation*}
        k_{ij} = 
        \begin{cases}
            (-1)^{j-1} & i+j = n+1\\
            0 & \text{otherwise}
        \end{cases}.
    \end{equation*}    
    Then, $X$ is (strictly) $n$-sign consistent if and only if $C$ is (strictly) $n$-positive. 
\end{lem}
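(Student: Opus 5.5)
The plan is to relate the $n$-minors of $X$ to the minors of $C$ via a block factorization. Write $A := X_{((1:n),(1:n))}$, which is invertible by assumption. Every row of $X$ then lies in the row space of $A$, so
\begin{equation*}
X = Y A, \qquad Y := X A^{-1} \in \mathds{C}^{m\times n}.
\end{equation*}
By construction $Y_{((1:n),:)} = I_n$. By Cauchy--Binet, every $n$-minor factors as $|X_{(\mathcal{I},:)}| = |Y_{(\mathcal{I},:)}|\,|A|$. Hence $X$ is (strictly) $n$-sign consistent if and only if all maximal minors $|Y_{(\mathcal{I},:)}|$, $\mathcal{I}\in\mathcal{I}_{m,n}$, share a (strict) sign. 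Since $|Y_{((1:n),:)}|=1>0$, that common sign must be nonnegative (positive). Note that in the strict case $X$ is invertible on the top block automatically, so the hypothesis costs nothing there.

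Next, we compute the maximal minors of a matrix of the form $\begin{bmatrix} I_n \\ B\end{bmatrix}$, where $B := Y_{((n+1:m),:)}$. This requires $m \ge 2n$ so that the rows $m-n+1:m$ lie below the identity block; otherwise the rows of $C$ overlap $A$. In that overlapping case I would treat those rows as identity rows, and the combinatorics below still applies with some minors of $C$ trivial.

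The classical identity is as follows. Choose rows $\mathcal{I} = \mathcal{I}_1 \cup \mathcal{I}_2$, with $\mathcal{I}_1\subset(1:n)$ from the identity block and $\mathcal{I}_2$ from $B$. Then $|Y_{(\mathcal{I},:)}| = \pm |B_{(\mathcal{I}_2, (1:n)\setminus \mathcal{I}_1)}|$. The sign $\pm$ is determined by the positions of $\mathcal{I}_1$ and $\mathcal{I}_2$ via a Laplace expansion along the identity rows. Concretely, the sign is $(-1)^{\sum_{i\in \mathcal{I}_1}(\text{position shift})}$, and it depends only on the complementary column set $J=(1:n)\setminus\mathcal{I}_1$ and on $|J|$. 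The matrix $K_n$ is designed exactly to absorb this sign. It reverses the column order and multiplies column $j$ by $(-1)^{j-1}$. One then checks that $|(BK_n)_{(\mathcal{I}_2,J')}| = |Y_{(\mathcal{I},:)}|$ for the reflected index set $J'$, that is, every $j$-minor of $BK_n$ equals exactly a maximal minor of $Y$, and conversely.

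This sign bookkeeping is the main obstacle. I would verify it by first reducing to $|J|=1$: reversing and alternating the columns must give $|Y_{(\mathcal{I},:)}| = c_{i,n+1-j}(-1)^{\,\cdot}$ with a positive net sign. Then I would argue for general $|J|$ via the multiplicativity of the permutation sign. Reversing $j$ selected columns contributes $(-1)^{j(j-1)/2}$, and the alternating factors contribute the remaining parity, which matches the Laplace sign.

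Finally, $C = X_{((m-n+1:m),:)}A^{-1}K_n = Y_{((m-n+1:m),:)}K_n$. Its $j$-minors, for $j\in(1:n)$, range over all maximal minors of $Y$ using rows from the top block together with rows from the bottom $n$ rows. The rows of $X$ strictly between these blocks, if any, are not covered. For the converse direction, I would therefore either assume $m=2n$, or apply the argument to all $n$-row windows and combine them with \cref{prop:consecutive_old}. Since the statement is cited from \cite{pena_matrices_1995}, I would follow their setup of which rows enter $C$.

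Granting the identification, $X$ is (strictly) $n$-sign consistent if and only if all these minors are nonnegative (positive), if and only if $C$ is (strictly) $n$-positive. The empty minor ($j=0$, corresponding to $\mathcal{I}=(1:n)$) gives the normalization $1$.
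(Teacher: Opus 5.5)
The paper gives no proof of this lemma; it only quotes it from \cite{pena_matrices_1995}. Your algebraic core is the standard argument and is sound. Set $A := X_{((1:n),(1:n))}$, $Y := XA^{-1}$ and $B := Y_{((n+1:m),:)}$. Then $|X_{(\mathcal{I},:)}| = |A|\,|Y_{(\mathcal{I},:)}|$. For $\mathcal{I} = \mathcal{I}_1\cup\mathcal{I}_2$, $J := (1:n)\setminus\mathcal{I}_1$ and $j := |J|$, moving the columns in $J$ to the end gives
\[
|Y_{(\mathcal{I},:)}| = (-1)^{\sum_{c\in J}(n-c)-\binom{j}{2}}\,|B_{(\mathcal{I}_2-n,\,J)}|.
\]
On the other side, $(BK_n)_{(:,n+1-c)} = (-1)^{n-c}B_{(:,c)}$, so $|(BK_n)_{(\mathcal{I}_2-n,\,n+1-J)}| = (-1)^{\binom{j}{2}+\sum_{c\in J}(n-c)}\,|B_{(\mathcal{I}_2-n,\,J)}|$. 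Since $(-1)^{-\binom{j}{2}}=(-1)^{\binom{j}{2}}$, the two signs coincide. So the sign bookkeeping you flagged as the main obstacle closes exactly as you predicted.

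The genuine gap is the final step. You never settle which rows enter $C$, and both fallbacks you offer fail, because the lemma as printed (rows $(m-n+1:m)$) is false unless $m=2n$.

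\emph{Case $m>2n$.} The skipped middle rows break the ``if'' direction. Take $n=1$, $m=3$, $X=(1,-1,1)^\transp$. Then $C=(1)$ is strictly positive, yet $X$ is not $1$-sign consistent. No combination of $n$-row windows with \cref{prop:consecutive_old} can rescue a false implication.

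\emph{Case $n<m<2n$.} The overlapping rows are not ``trivial''. An identity row $e_i^\transp$ of $Y$ becomes $e_i^\transp K_n=(-1)^{n-i}e_{n+1-i}^\transp$ in $C$. This row has zero entries, so $C$ is never strictly $1$-positive when $n\ge 2$, and it has a negative entry when $n-i$ is odd. For example, take $n=3$, $m=4$, and $X$ with rows $e_1^\transp,e_2^\transp,e_3^\transp,(1,-1,1)$. All four $3$-minors of $X$ equal $1$, yet $C$ contains the row $(0,-1,0)$.

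The missing step is to correct the statement to $C := X_{((n+1:m),(1:n))}A^{-1}K_n = BK_n\in\mathds{C}^{(m-n)\times n}$. This uses all remaining rows, and it is also what the paper actually applies after \cref{thm:row_consec_Hankel}, where $K_k^\transp H^g(1,k,k)^{-1}H^g(k+1,k,\cdot)$ collects all remaining columns. For this $C$, the map $\mathcal{I}\mapsto(\mathcal{I}_2-n,\,n+1-J)$ is a bijection from $\mathcal{I}_{m,n}\setminus\{(1:n)\}$ onto all row/column index pairs of minors of $C$ of orders $1,\dots,\min\{n,m-n\}$. Your identity then yields both directions at once, with ``$n$-positive'' read as totally positive. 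Deferring to ``their setup'' is not a substitute for stating this corrected $C$ and observing that the bijection covers every minor.
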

These results can be extended to the corresponding infinite-dimensional linear operators (see, e.g., \cite{grussler2020variation,karlin1968total,grussler2021internally}), which, in the case of the Hankel operator, leads to the following notions.
\begin{defn}
  Let $k \in \mathds{Z}_{>0}$, then $\Hank{g}$ is called 
  \begin{itemize}
      \item \emph{(strictly) $k$-sign consistent} if $H^g(1,N,N)$ is $k$-sign consistent for all $N \geq k$.
      \item \emph{(strictly) $k$-positive} if $H^g(1,N,N)$ is (stictly) $k$-positive for all $N \geq k$.
  \end{itemize}
 Analogous definitions are used for $\Toep{g}$ by replacing $H^g(1,N,N)$ with $T^g(0,N,N)$. 
\end{defn}
By  \cref{lem:k_pos_Hankel}, we can characterize the $k$-positivity of $\mathcal{H}^g$.
\begin{cor}\label{cor:k_pos_Hankel}
    Let $k \in \mathds{Z}_{>0}$, and $g \in \mathds{C}^\mathds{Z}$. Then, the following are equivalent:
    \begin{enumerate}
        \item $\mathcal{H}^g$ is (strictly) $k$-positive.
        \item $H^g(1,j,N)$ is (strictly) $j$-positive for all $N \geq j$ and all $j \in (1:k)$
        \item All consecutive $j$-minors of $H^g(t,j,N)$ are nonnegative (positive) for all $j \in (1:k)$ and all $N \geq j$.
    \end{enumerate}
\end{cor}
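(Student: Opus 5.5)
The plan is to prove the cycle (i)$\Rightarrow$(ii)$\Rightarrow$(iii)$\Rightarrow$(i), reducing everything to finite Hankel matrices and \cref{lem:k_pos_Hankel}.

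\emph{(i)$\Rightarrow$(ii).} Suppose $\mathcal{H}^g$ is (strictly) $k$-positive, i.e., $H^g(1,N,N)$ is (strictly) $k$-positive for all $N\ge k$. Fix $j\in(1:k)$ and $N\ge j$. Then $H^g(1,j,N)$ is the submatrix of $H^g(1,N,N)$ formed by its first $j$ rows. Every $i$-minor of $H^g(1,j,N)$ with $i\le j\le k$ is therefore an $i$-minor of $H^g(1,N,N)$, and so it is nonnegative (positive). Hence $H^g(1,j,N)$ is (strictly) $j$-positive.

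\emph{(ii)$\Rightarrow$(iii).} This is immediate, because consecutive $j$-minors are in particular $j$-minors. Here I read $t=1$ in (iii), since the operator only involves $g_1,g_2,\dots$; a general $t\ge1$ would also be covered by the shift argument below.

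\emph{(iii)$\Rightarrow$(i).} This is the substantive step. Fix $N\ge k$. By \cref{lem:k_pos_Hankel} applied with $M=N$, it suffices to show that every consecutive $j$-minor of $H^g(1,N,N)$ is nonnegative (positive) for each $j\in(1:k)$. Consider such a minor with rows $(p+1:p+j)$ and columns $(q+1:q+j)$. By the Hankel structure, its $(a,b)$ entry is $g_{1+p+q+a+b-2}$, so the minor depends only on $s=p+q$. It therefore equals the consecutive $j$-minor of the $j$-row matrix $H^g(1,j,N')$ taken on columns $(s+1:s+j)$, provided $N'\ge s+j$. Choosing $N'=2N$ suffices, since $s+j\le 2(N-j)+j\le 2N$. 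By (iii) this minor is nonnegative (positive), and \cref{lem:k_pos_Hankel} then gives that $H^g(1,N,N)$ is (strictly) $k$-positive.

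The one point needing care is that (iii) must quantify over all $N$, not only $N=j$. This is exactly why the $j$-row reshaped matrices must be allowed arbitrarily many columns: the shift $s=p+q$ can be as large as $2(N-j)$. Beyond this, the argument is a direct application of \cref{lem:k_pos_Hankel}, including in the strict case, which that corollary already handles.
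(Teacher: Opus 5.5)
Your proof is correct and follows the paper's route: the paper's entire argument is to invoke \cref{lem:k_pos_Hankel}, and your observation that a consecutive $j$-minor of $H^g(1,N,N)$ depends only on $p+q$, and therefore reappears among the column windows of $H^g(1,j,2N)$, is exactly the detail the paper leaves implicit. One small fix in (i)$\Rightarrow$(ii): when $j \le N < k$, condition (i) does not cover $H^g(1,N,N)$, so embed $H^g(1,j,N)$ into $H^g(1,N',N')$ with $N' = \max\{N,k\}$ instead.
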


\subsection{Enumerative Combinatorics}
For an $r$-tuple $v = (v_i) \in (1:n)^r$, we use $\permut{v}$ to denote the set of all \emph{distinct} permutations. If $v$ has distinct entries, then the \emph{parity} of a permutation $p \in \permut{v}$ is defined by $\sign_v(p) = (-1)^m$, where $m$ is any number of transpositions needed to arrive from $v$ to $p$. Further, if $\sigma \in \permut{\{1,\dots,r\}}$, then $v_{\sigma} := (v_{\sigma_1},\dots,v_{\sigma_r})$ denotes the permutation of $v$ under $\sigma$. 

The so-called \emph{Young diagram of shape} $\lambda \in \mathcal{D}_{n,r}$ is a collection of left-aligned boxes, where the $j$-th row contains $\lambda_j$ boxes (see~\Cref{fig:Kostka}). The \emph{Kostka number} $K_{\lambda,\mu}$ with respect to $\lambda\in \mathcal{D}_{n,r}$ and $\mu \in \mathcal{D}_{\lambda}$, is the number of \emph{semi-standard Young tableaux of shape $\lambda$ and weight $\mu$} (see~\Cref{fig:Kostka}), i.e., it counts the number of ways to fill the Young diagram corresponding to $\lambda$ with integers in $(0:n-1)$ such that
\begin{enumerate}[label=\Roman*.]
    \item the integer $i$ can be used exactly $\mu_i$ times.
    \item the entries of the boxes do not decrease along rows (from left to right) and strictly increase along columns (top to bottom). 
\end{enumerate}
\begin{figure}
\begin{center}
\begin{tikzpicture}[scale=1]
\draw (0,0) rectangle (1,1);
\draw (1,0) rectangle (2,1);
\draw (0,-1) rectangle (1,0);
\node at (0.5,0.5) {1};
\node at (1.5,0.5) {2};
\node at (0.5,-0.5) {3};

\begin{scope}[xshift=3cm]
\draw (0,0) rectangle (1,1);
\draw (1,0) rectangle (2,1);
\draw (0,-1) rectangle (1,0);
\node at (0.5,0.5) {1};
\node at (1.5,0.5) {3};
\node at (0.5,-0.5) {2};
\end{scope}
\end{tikzpicture}
\end{center}
\caption{Semi-standard Young tableaux for the Kostka number $K_{\lambda,\mu}$ with $\lambda = (2,1,0)$ and $\mu = (1,1,1)$. Each tableau has three rows of boxes, with the first row containing $\lambda_1 = 2$ boxes, the second row containing $\lambda_2 = 1$ boxes, and the third row containing $\lambda_3 = 0$ boxes, which builds the Young diagram of shape $\lambda$. Then, there are exactly two possibilities to fill in $\mu_1$-times the integer $1$, $\mu_2$-times the integer $2$, and $\mu_3$-times the integer $3$, such that the entries of the tableaux do not decrease along rows and strictly increase along columns. \label{fig:Kostka}}
\end{figure}
We will use the following simple facts from \cite[7.10.5~Proposition]{stanley2023enumerative}.
\begin{lem}\label{lem:Kosta_number}
    Let $\lambda \in  \mathcal{D}_{n,r}$ and $\mu \in \mathcal{D}_\lambda$, then
    \begin{enumerate}[label=\roman*.]
        \item $K_{\lambda,\mu} > 0$ if and only if  $\lambda \trianglerighteq \mu$. 
        \item $K_{\lambda,\mu} = K_{\lambda - \lambda_r \mathbf{1}_r,\mu-\lambda_r \mathbf{1}_r}$. 
    \end{enumerate}
\end{lem}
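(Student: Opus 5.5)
The two claims are cited from \cite[7.10.5~Proposition]{stanley2023enumerative}, but I would reprove them directly from the tableau definition. In this setting the entries $i$ range over $(0:n-1)$ and are used $\mu_i$ times. The key observation is the column-strictness condition: in any semi-standard tableau, every entry lying in row $j$ of the Young diagram is at least the entry directly above it plus one. By induction, the entries in row $j$ are therefore at least $j-1$ when the smallest index is counted as the first one.

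For (i), necessity goes as follows. Suppose a semi-standard tableau of shape $\lambda$ and weight $\mu$ exists, and fix $k$. All entries with index among the first $k$ values must lie in the first $k$ rows, because by column-strictness an entry in row $j$ has index at least $j$. Hence $\sum_{i\le k}\mu_i \le \sum_{i\le k}\lambda_i$, which is exactly $\lambda \trianglerighteq \mu$. For sufficiency I would construct a tableau by induction on the number of boxes. The first attempt is a greedy filling: place the $\mu_1$ copies of the smallest value in row~$1$, which is possible since $\mu_1\le\lambda_1$. Then remove the first row and the first value, and check that the residual shape and weight again satisfy dominance after re-sorting. This step is the main obstacle, because the removed row may leave a non-sorted residual and naive greedy can break column-strictness. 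The cleaner standard route I would actually take is the following. Dominance $\lambda \trianglerighteq \mu$ implies that $\mu$ is obtained from $\lambda$ by a finite chain of elementary lowering moves ($\lambda_i\to\lambda_i-1$, $\lambda_j\to\lambda_j+1$ with $i<j$) that stay within partitions. So it suffices to show two things. First, $K_{\lambda,\lambda}=1$, via the tableau that fills row $i$ with the value $i$. Second, $K_{\lambda,\nu}>0$ implies $K_{\lambda,\nu'}>0$ for such a move. The second point I would obtain from the Bender–Knuth involution, which shows that $K_{\lambda,\nu}$ is symmetric in the weight. Combined with the single-swap modification of a tableau that relabels one occurrence of value $i$ as value $j$ along a horizontal strip, this gives positivity along the chain.

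For (ii), I would use an explicit bijection. In a tableau of shape $\lambda$ with $r$ rows, the first $\lambda_r$ columns all have full length $r$. By column-strictness, the entries of each such column are forced to be exactly the $r$ smallest values in increasing order. This forces each value $i\le r$ to appear at least $\lambda_r$ times, in those positions. Deleting these $\lambda_r$ full columns yields a semi-standard tableau of shape $\lambda-\lambda_r\mathbf 1_r$ and weight $\mu-\lambda_r\mathbf 1_r$. Conversely, prepending $\lambda_r$ columns $(1,\dots,r)^\transp$ to any such tableau preserves the row and column conditions, because the new entries are minimal in their rows and strictly increasing down their columns. The two maps are mutually inverse, which proves the equality of Kostka numbers.
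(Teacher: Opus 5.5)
The paper does not prove this lemma; it simply cites Stanley. A self-contained argument is therefore a legitimate route. Your necessity argument in (i) and your column-deletion bijection for (ii) are correct and complete.

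The gap is in the sufficiency half of (i), at the step $K_{\lambda,\nu}>0\Rightarrow K_{\lambda,\nu'}>0$ for an elementary move $\nu'=\nu-e_i+e_j$, $i<j$ (with $e_i$ the $i$-th unit tuple). You never say which occurrence is relabelled or why the result stays semistandard. Read literally (change one letter $i$ into the letter $j$), the operation fails for non-adjacent letters. Take $\lambda=(3,0,0)$ and $\mu=(1,1,1)$, with letters $1,2,3$ as in the paper's figures. The only chain of partitions is $(3,0,0)\to(2,1,0)\to(1,1,1)$, so the non-adjacent move $i=1$, $j=3$ is forced. The only tableau of weight $(2,1,0)$ is the row $1\,1\,2$, and turning either $1$ into a $3$ breaks the row condition, even though $K_{\lambda,(1,1,1)}=1$. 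Invoking Bender--Knuth symmetry does not fix this by itself. It only helps once combined with an adjacent-letter relabelling lemma, which is exactly the part you have not proved.

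To close the gap, reduce to adjacent letters. You can use the symmetry to place the parts $\nu_i,\nu_j$ in positions $a,a+1$. Alternatively, with no symmetry at all, pass through the compositions $\nu-e_i+e_m$, $m=i,i+1,\dots,j$. Each step turns one letter $m$ into $m+1$, and the multiplicity of $m$ strictly exceeds that of $m+1$. For $m=i$ this is $\nu_i>\nu_{i+1}$, which follows from $\nu'$ being a partition; for $i<m<j$ it reads $\nu_m+1>\nu_{m+1}$.

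For the adjacent step, let $T$ have composition weight $\alpha$ with $\alpha_a>\alpha_{a+1}$. Call an entry $a$ of $T$ \emph{free} if the cell directly below it does not contain $a+1$.
\begin{itemize}
\item At most $\alpha_{a+1}$ entries $a$ sit directly above an $a+1$, so a free $a$ exists.
\item In every row, the non-free $a$'s form a left prefix of that row's $a$'s: the entries below them are $>a$ and weakly increasing, and missing cells below count as free. Hence the rightmost $a$ of a row containing a free $a$ is free.
\item Changing that entry into $a+1$ preserves semistandardness: its right neighbour is $\ge a+1$, the entry below is $>a+1$, and the entry above is $<a$.
\end{itemize}
With this lemma your chain argument goes through. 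You also need a citation for the fact that dominance is generated by elementary moves through partitions; these intermediate partitions automatically have at most $r$ parts, since they dominate $\mu$.
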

Similarly, for $\lambda\in \mathcal{D}_{n,r}$ and $\gamma \in \mathcal{D}_{m,r}$ with $\lambda \leq \gamma$, the \emph{skew Young diagram of shape $\gamma/\lambda$} is defined as the difference between the Young diagram of shape $\gamma$ and the Young diagram of shape $\lambda$, i.e., the set of boxes that belong to $\gamma$, but not $\lambda$. The \emph{Littlewood–Richardson coefficients} $c_{\lambda,\mu}^\gamma$ with respect to $\lambda\in \mathcal{D}_{n,r}$, $\mu \in \mathcal{D}_{l,r}$ and $\gamma \in \mathcal{D}_{\lambda+\mu}$ can then be defined as the number of \emph{skew semi-standard Young tableaux of shape $\gamma/\lambda$ with weight $\mu$} (see~\Cref{fig:little}), where the weights are filled in such that
\begin{enumerate}[label=\Roman*.]
    \item the integer $i$ can be used exactly $\mu_i$ times.
    \item the entries of the boxes do not decrease along rows (from left to right) and strictly increase along columns (top to bottom). 
    \item The $r$-tuple $(w_1,\dots,w_{\sum_{i=1}^r \mu_i})$ that results from reading the tableau from right to left and top to bottom is a \emph{lattice word}, i.e., any $(w_1,\dots,w_j)$, $j \in (1:\sum_{i=1}^r \mu_i)$, contains the integer $i$ at least as often as the integer $i+1$ for all $i \in (1:\max(\mu)-1)$. 
\end{enumerate}

\begin{figure}
    \centering
 \begin{tikzpicture}[yscale=-1]

\draw (2,0) rectangle (3,1) node[pos=.5]{1};
\draw (3,0) rectangle (4,1) node[pos=.5]{1};

\draw (1,1) rectangle (2,2) node[pos=.5]{1};
\draw (2,1) rectangle (3,2) node[pos=.5]{2};

\draw (0,2) rectangle (1,3) node[pos=.5]{2};
\draw (1,2) rectangle (2,3) node[pos=.5]{3};

\draw (7,0) rectangle (8,1) node[pos=.5]{1};
\draw (8,0) rectangle (9,1) node[pos=.5]{1};

\draw (6,1) rectangle (7,2) node[pos=.5]{2};
\draw (7,1) rectangle (8,2) node[pos=.5]{2};

\draw (5,2) rectangle (6,3) node[pos=.5]{1};
\draw (6,2) rectangle (7,3) node[pos=.5]{3};

\end{tikzpicture}
    \caption{Skew semi-standard Young tableaux for the Littlewood-Richardson coefficient $c_{\lambda,\mu}^\gamma$ with $\lambda = (2,1,0)$, $\mu = (3,2,1)$ and $\gamma = (4,3,2)$. The Young diagram of shape $\lambda$ is as in \Cref{fig:Kostka}, which is missing from the Young diagram of shape $\gamma$ to establish the skew Young diagram of shape $\gamma/\lambda$. Then, there are exactly two possibilities to fill in $\mu_1$-times the integer $1$, $\mu_2$-times the integer $2$ and $\mu_3$-times the integer $3$, such that the entries of the tableaux do not decrease along rows, strictly increase along columns, and reading the tableaux from right to left and top to bottom creates a lattice word. The lattice word property in the left tableau means that $(1)$, $(1,1)$, $(1,1, 2)$, $(1,1,2,1)$, $(1,1,2,1,3)$ and $(1,1,2,1,3,2)$ contain at least as many $1$s as $2$s and at least as many $2$s as $3$s, and similarly for the right tableau.}
    \label{fig:little}
\end{figure}
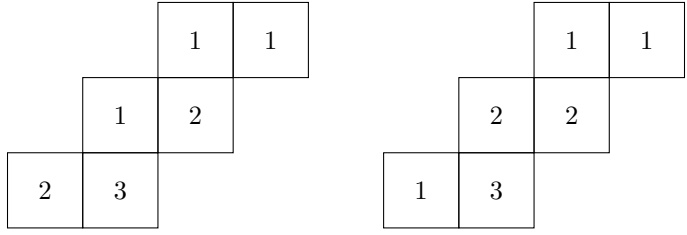
Kostka numbers and Littlewood-Richardson coefficients originated from the study of so-called Schur polynomials, where the \emph{Schur polynomial to $\lambda \in \mathcal{D}_{n,r}$ in variables in $x \in \mathds{C}^r$}, is defined as the ratio
\begin{equation}
    s_\lambda(x_1,\dots,x_r) := \frac{a_{(\lambda_1+r-1,\lambda_2+r-2,\dots,\lambda_r+0)}(x_1,\dots,x_r)}{a_{(r-1,r-2,\dots,0)}(x_1,\dots,x_r)} \label{eq:schur_poly}
\end{equation}
where the \emph{alternating polynomial $a_\mu$ in variables $x \in \mathds{C}^r$} for $\mu \in (1:m)^r$ is given by
\begin{equation*}
    a_{\mu}(x_1,\dots,x_r) := \begin{vmatrix}
        x_1^{\mu_r} & \dots & x_r^{\mu_r}\\
        x_1^{\mu_{r-1}} & \dots & x_r^{\mu_{r-1}}\\
        \vdots & & \vdots \\
        x_1^{\mu_1} & \dots & x_r^{\mu_1}
    \end{vmatrix}.
\end{equation*}
Using the notation $x^p := \prod_{i=1}^r x_i^{p_i}$ for $x \in \mathds{C}^r$ and $p \in (1:m)^r$, it follows from the Leibniz formula for determinants that 
\begin{equation}
a_{\mu}(x) = \begin{cases}
    \sum_{p \in \permut{\mu}} \sign_\mu(p) x^p & \text{if $\mu$ has distinct values}, \\
    0 & \text{else}.
\end{cases}  \label{eq:alt_poly_leibniz}  
\end{equation}
By \cite[A1.5.3 Theorem and Eq. (7.35)]{stanley2023enumerative}, it holds then for $\lambda\in \mathcal{D}_{n,r}$, $\mu \in \mathcal{D}_{l,r}$ that 
\begin{equation}
    s_\lambda(x) s_{\mu}(x) = \sum_{\gamma \in \mathcal{D}_{\lambda+\mu}} c^{\gamma}_{\lambda,\mu} s_\gamma(x) \label{eq:LRrule}
\end{equation}
and
\begin{equation}
    s_\lambda(x) = \sum_{\mu \in \mathcal{D}_\lambda} K_{\lambda,\mu} m_\mu(x), \label{eq:schur_kostka}
\end{equation}
with \emph{monomial symmetric function} $m_\mu(x) := \sum_{p \in \permut{\mu}} x^p$.

\section{Main Results}\label{sec:results}
In \cref{lem:k_pos_Hankel}, we observed that $k$-positivity of Hankel matrices $H^g(1,M,N)$ can be verified efficiently by considering (consecutive) minors only within $H^g(1,k,M+N-k)$. The main objective of this work is to extend this result to the setting of \emph{$k$-sign consistent} Hankel matrices and operators.
\subsection{Row-Consecutive Minor Decomposition}
Central to our analysis is the following decomposition of arbitrary minors into nonnegative integer linear combinations of row-consecutive minors.
\begin{thm} \label{thm:skip_minor_decomp}
    Let $A =(a_{ij}) \in \mathds{C}^{M \times n}$, $M \geq n$, $s \in \mathcal{I}_{M,n}$ and $\lambda \in \mathcal{D}_{s_{n},n}$ be defined by $\lambda_i := s_{n-i+1}-(n-i+1)$. Then,  
    \begin{equation}
    |A_{(s,:)}| = \sum_{\mu \in \mathcal{D}_{\lambda}} K_{\lambda,\mu} D_{\mu} \label{eq:skip_minor_decomp}
    \end{equation}
    where
    \begin{equation}
        D_\mu := \sum_{p \in \permut{\mu}} \begin{vmatrix}
            a_{p_{1}+1,1} & \dots & a_{p_{n}+1,n}\\
            \vdots &  & \vdots\\
            a_{p_{1}+n,1} & \dots & a_{p_n+n,n}
        \end{vmatrix}
    \end{equation}
    and $K_{\lambda,\mu}$ are the Kostka numbers associated with \(\lambda\) and \(\mu\).
\end{thm}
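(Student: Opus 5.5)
The plan is to use multilinearity in the columns to reduce \eqref{eq:skip_minor_decomp} to a polynomial identity for Vandermonde-type columns, where it becomes exactly the Kostka expansion \eqref{eq:schur_kostka} multiplied by the Vandermonde determinant.

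\emph{Reduction to Vandermonde columns.} Both sides of \eqref{eq:skip_minor_decomp} are multilinear in the columns $a_{:,1},\dots,a_{:,n}$ of $A$. For the left-hand side this is linearity of $|A_{(s,:)}|$ in each column. For the right-hand side, each row-consecutive minor in $D_\mu$ uses only entries of column $j$ in its $j$-th column, so it is linear in $a_{:,j}$; finite sums preserve this. The vectors $(1,x,\dots,x^{M-1})^\transp$ with $x\in\mathds{C}$ span $\mathds{C}^M$, since $M$ of them with distinct $x$ form an invertible Vandermonde matrix. Hence it suffices to prove the identity for $A=V_M(x)$, $x\in\mathds{C}^n$, i.e.\ $a_{ij}=x_j^{i-1}$. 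By continuity, or because both sides are then polynomials in $x$, we may also allow arbitrary $x$.

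\emph{Left-hand side for $A=V_M(x)$.} The submatrix $A_{(s,:)}$ has entries $x_j^{s_i-1}$. Since $s_i-1=\lambda_{n-i+1}+(i-1)$, listing the rows in increasing order of exponent gives $|A_{(s,:)}|=a_{(\lambda_1+n-1,\dots,\lambda_n+0)}(x)$, with the ordering convention in the definition of $a_\mu$. By \eqref{eq:schur_poly} this equals $s_\lambda(x)\,a_{(n-1,\dots,0)}(x)$.

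\emph{Right-hand side for $A=V_M(x)$.} The row-consecutive minor with shifts $p$ has $(i,j)$ entry $x_j^{p_j+i-1}$. Factoring $x_j^{p_j}$ out of column $j$ leaves the Vandermonde determinant $|V_n(x)|=a_{(n-1,\dots,0)}(x)$. So the minor equals $x^p\,a_{(n-1,\dots,0)}(x)$, and summing over $p\in\permut{\mu}$ gives $D_\mu=m_\mu(x)\,a_{(n-1,\dots,0)}(x)$. Multiplying \eqref{eq:schur_kostka} by $a_{(n-1,\dots,0)}(x)$ then yields \eqref{eq:skip_minor_decomp} for Vandermonde columns, and hence for all $A$ by multilinearity.

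\emph{Main obstacle: well-definedness of the right-hand side.} One must check that every row index $p_j+i$ with $i\le n$ lies in $(1:M)$, at least for all $\mu$ with $K_{\lambda,\mu}\neq 0$. By \cref{lem:Kosta_number}, $K_{\lambda,\mu}>0$ forces $\lambda\trianglerighteq\mu$, so $\max(p)=\mu_1\le\lambda_1=s_n-n$. Therefore $p_j+n\le s_n\le M$, and terms with $K_{\lambda,\mu}=0$ can be discarded (or regarded as vanishing) beforehand. Beyond this, one only needs to verify carefully that the sign and ordering conventions in the definitions of $a_\mu$ and $V_M$ match, so that the same factor $a_{(n-1,\dots,0)}(x)$, with the same sign, appears on both sides.
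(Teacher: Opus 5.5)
Your proposal is correct and follows essentially the same route as the paper: the paper also proves the Vandermonde case (\cref{lem:vander}) directly from \cref{eq:schur_kostka}, then extends to general $A$ by writing $A = V_M(x)C$ with distinct $x_i$ and expanding multilinearly in the columns, which is your spanning-set argument. Your extra check that $K_{\lambda,\mu}>0$ forces $\mu_1\le\lambda_1=s_n-n$, so every row index stays in $(1:M)$, is a detail the paper leaves implicit and is a welcome addition.
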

\begin{rem}\label{rem:skip_minor_decomp_n_2}
It suffices to consider the case \(s_1 = 1\) in \cref{thm:skip_minor_decomp}; all other cases follow by replacing \(A\) with \(A_{((s_1:M),:)}\). This is consistent with the identity \( K_{\lambda,\mu} = K_{\lambda - \lambda_n \mathbf{1}_n,\; \mu - \lambda_n \mathbf{1}_n} \) from \cref{lem:Kosta_number}. For the special case \(n=2\), the Young diagram of shape \((s_2-2,0)\) consists of a single row, implying \(K_{\lambda,\mu}=1\). Hence,
\begin{equation}
    \begin{vmatrix}
        a_{1,1} & a_{1,2}\\
        a_{s_2,1} & a_{s_2,2}
    \end{vmatrix} = \sum_{i=0}^{\floor{\frac{s_2-2}{2}}} D_{(s_2-2-i,i)} = \sum_{i=0}^{s_2-2}  \begin{vmatrix}
        a_{1+i,1} & a_{s_2-i-1,2}\\
        a_{2+i,1} & a_{s_2-i,2}
    \end{vmatrix}.
\end{equation}
In other words, for $n=2$, \cref{thm:skip_minor_decomp} amounts to a telescope-sum expansion. 
\end{rem}
We prove \cref{thm:skip_minor_decomp} by first establishing its validity for the specialization \(A = V_M(x)\), which is a direct reformulation of \cref{eq:schur_kostka}.
\begin{lem}\label{lem:vander}
    Let $S := V_M(x) \in \mathds{C}^{M \times n}$, $M \geq n$, for some $x \in \mathds{C}^n$,  $s \in \mathcal{I}_{M,n}$ and $\lambda \in \mathcal{D}_{s_{n},n}$ be given by $\lambda_i := s_{n-i+1}-(n-i+1)$. Then, 
  \begin{equation} \label{eq:skipped_minor_poly}
    |S_{(s,:)}| = \sum_{\mu \in \mathcal{D}_\lambda} K_{\lambda,\mu} d_{\mu}(x)
    \end{equation} 
   where
     \begin{equation}
        d_\mu(x) := \sum_{p \in \permut{\mu}}  \begin{vmatrix}
            x_1^{p_{1}} & \dots & x_n^{p_{n}}\\
            \vdots &  & \vdots\\
            x_1^{p_{1}+n-1} & \dots & x_n^{p_n+n-1,n}
        \end{vmatrix} = |S_{(1:n,:)}| m_{\mu}(x). \label{eq:d_mu_poly}
    \end{equation}
        and $K_{\lambda,\mu}$ are the Kostka numbers with respect to $\lambda$ and $\mu$. 
\end{lem}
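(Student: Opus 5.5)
The plan is to read \cref{eq:skipped_minor_poly} as a rewriting of the bialternant formula \cref{eq:schur_poly} followed by the monomial expansion \cref{eq:schur_kostka}. There are three steps: identify the left-hand side with an alternating polynomial, establish the second equality in \cref{eq:d_mu_poly}, and then combine.

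\emph{Step 1 (left-hand side).} The row of $V_M(x)$ with index $s_i$ is $(x_1^{s_i-1},\dots,x_n^{s_i-1})$. Hence $|S_{(s,:)}|$ is the determinant whose $i$-th row carries the exponent $s_i-1$. Set $\nu := (\lambda_1+n-1,\dots,\lambda_n+0)$. Since $\lambda_i+n-i = s_{n-i+1}-1$, the tuple $\nu$ lists the exponents $s_{n}-1,\dots,s_1-1$ in decreasing order. The definition of $a_\nu$ places row $i$ at exponent $\nu_{n-i+1}=s_i-1$. Thus $|S_{(s,:)}| = a_\nu(x)$ exactly, with no sign adjustment. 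In the same way, $|S_{((1:n),:)}| = a_{(n-1,\dots,0)}(x)$, the Vandermonde determinant. I will also check that $\lambda\in\mathcal{D}_{s_n,n}$: it is weakly decreasing because $s$ is strictly increasing, and it is nonnegative because $s_j\ge j$.

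\emph{Step 2 (the identity for $d_\mu$).} For fixed $p\in\permut{\mu}$, the $j$-th column of the determinant in \cref{eq:d_mu_poly} is $x_j^{p_j}$ times the $j$-th column of $S_{((1:n),:)}$. By multilinearity in the columns, this determinant equals $x^p\,|S_{((1:n),:)}|$. Summing over $p\in\permut{\mu}$ gives $|S_{((1:n),:)}|\,m_\mu(x)$, by the definition of $m_\mu$.

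\emph{Step 3 (combine).} By \cref{eq:schur_poly}, $a_\nu(x) = s_\lambda(x)\,a_{(n-1,\dots,0)}(x)$ as polynomials. This is a polynomial identity, so it holds for all $x\in\mathds{C}^n$, including points where the Vandermonde determinant vanishes; there both sides are $0$, since $a_\nu$ also has repeated columns. Inserting \cref{eq:schur_kostka} gives
\[
|S_{(s,:)}| = |S_{((1:n),:)}|\sum_{\mu\in\mathcal{D}_\lambda} K_{\lambda,\mu} m_\mu(x) = \sum_{\mu\in\mathcal{D}_\lambda} K_{\lambda,\mu} d_\mu(x),
\]
where the second equality uses Step 2.

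The main obstacle is bookkeeping rather than substance. First, the reversed row ordering in the definition of $a_\mu$ must match the increasing row order of $S_{(s,:)}$ so that no stray sign appears; Step 1 verifies this. Second, the index set $\mathcal{D}_\lambda$ in \cref{eq:schur_kostka} must agree with the weights actually occurring. A weight $\mu$ that is not dominated by $\lambda$ simply has $K_{\lambda,\mu}=0$ by \cref{lem:Kosta_number}, and every $\mu$ with nonzero $K_{\lambda,\mu}$ has entries at most $\lambda_1\le s_n-n$. Consequently, every exponent $p_j+n-1$ that appears is at most $s_n-1\le M-1$, which keeps all the row-shifted minors inside $V_M(x)$. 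This last remark is what will later let the lemma transfer to \cref{thm:skip_minor_decomp}.
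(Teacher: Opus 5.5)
Your proposal is correct and follows essentially the same route as the paper: you identify $|S_{(s,:)}| = a_{(\lambda_1+n-1,\dots,\lambda_n)}(x) = s_\lambda(x)\,|S_{((1:n),:)}|$ via the bialternant definition \cref{eq:schur_poly}, obtain the second equality in \cref{eq:d_mu_poly} from column multilinearity, and conclude with the Kostka expansion \cref{eq:schur_kostka}. Your write-up is slightly more careful than the paper's, since you check the row-order/sign convention of $a_\mu$ explicitly and use the polynomial identity instead of dividing by a possibly vanishing Vandermonde determinant.
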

\begin{proof}
    Since equality in \cref{eq:d_mu_poly} follows from the multi-linearity of the determinant, and since $\frac{|S_{(s,:)}|}{|S_{((1:n),:)}|}$ is a Schur polynomial, our claim can be restated as
    \begin{equation*}
        \frac{|S_{(s,:)}|}{|S_{((1:n),:)}|} = \frac{1}{{|S_{((1:n),:)}|}} \sum_{\mu \in \mathcal{D}_\lambda} K_{\lambda,\mu} m_\mu(x),
    \end{equation*}
which is identical to \cref{eq:schur_kostka}.
\end{proof}
\begin{proof}[\cref{thm:skip_minor_decomp}]
Let $x \in \mathds{C}^{M}$ be such that all $x_i$ are distinct from each other. Then, $\rk(V_M(x)) = M$ and there exists a $C \in \mathds{C}^{M\times n}$ such that $A = V_M(x) C$. By the multi-linearity of the determinant and \cref{lem:vander}, it follows then that
\begin{align*}
|A_{(s,:)}| &=  \sum_{j_1=1}^{M} \cdots \sum_{j_n=1}^M \left[ |{V_M(x_{\{j_1,\dots,j_n\}})}_{(s,:)}| \prod_{i=1}^n c_{j_i i} \right] = \sum_{j_1=1}^{M} \cdots \sum_{j_n=1}^M \left[ \sum_{\mu \in \mathcal{D}_\lambda} K_{\lambda,\mu} d_{\mu}(x_{\{j_1,\dots,j_n\}}) \prod_{i=1}^n c_{j_i i} \right] \\
&= \sum_{\mu \in \mathcal{D}_\lambda} K_{\lambda,\mu}  \sum_{j_1=1}^{M} \cdots \sum_{j_n=1}^M \left[  d_{\mu}(x_{\{j_1,\dots,j_n\}}) \prod_{i=1}^n c_{j_i i} \right],
\end{align*}
where $d_{\mu}(x_{\{j_1,\dots,j_n\}})$ are defined as in \cref{lem:vander}. Since all the determinants that compose $d_{\mu}(x_{\{j_1,\dots,j_n\}})$ result from the same columns as ${V_M(x_{\{j_1,\dots,j_n\}})}_{(s,:)}$, we are allowed to revert our initial expansion such that
\begin{align*}
    \sum_{j_1=1}^{M} \cdots \sum_{j_n=1}^M \left[  d_{\mu}(x_{\{j_1,\dots,j_n\}}) \prod_{i=1}^n c_{j_i i} \right] = \sum_{p \in \permut{\mu}} \begin{vmatrix}
            a_{p_{1}+1,1} & \dots & a_{p_{n}+1,n}\\
            \vdots &  & \vdots\\
            a_{p_{1}+n,1} & \dots & a_{p_n+n,n}
        \end{vmatrix} = D_\mu
\end{align*}
and our claim follows. 
\end{proof}
Although \cref{thm:skip_minor_decomp} is a straightforward representation-theoretic identity, the authors are not aware of any prior explicit appearance of this result in the literature.

\subsection{$k$-Sign Consistency of Hankel Matrices}
An immediate consequence of \cref{thm:skip_minor_decomp} together with the nonnegativity of Kostka numbers is the following certificate for $k$-sign consistency. 
\begin{cor}\label{cor:k_sign_row_consec} Let \(A = (a_{ij}) \in \mathds{C}^{M \times n}\) with \(M \ge n\). If all row-consecutive \(k\)-minors of \(A\) share the same (strict) sign, then \(A\) is (strictly) \(k\)-sign consistent. \end{cor}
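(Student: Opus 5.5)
Every $k$-minor of $A$ can be written as an $n=k$ instance of \cref{thm:skip_minor_decomp}. Fix a $k$-minor $|A_{(s,q)}|$ with $s \in \mathcal{I}_{M,k}$ and $q \in \mathcal{I}_{n,k}$. Set $B := A_{(:,q)} \in \mathds{C}^{M \times k}$. We have $M \geq n \geq k$, so \cref{thm:skip_minor_decomp} applies to $B$ with $n$ replaced by $k$. With $\lambda_i := s_{k-i+1}-(k-i+1)$ it gives
\begin{equation*}
|A_{(s,q)}| = |B_{(s,:)}| = \sum_{\mu \in \mathcal{D}_\lambda} K_{\lambda,\mu} \sum_{p \in \permut{\mu}} \begin{vmatrix}
b_{p_1+1,1} & \dots & b_{p_k+1,k}\\
\vdots & & \vdots\\
b_{p_1+k,1} & \dots & b_{p_k+k,k}
\end{vmatrix}.
\end{equation*}
Each inner determinant has columns $q_1<\dots<q_k$ of $A$, and column $j$ uses the consecutive rows $p_j+1,\dots,p_j+k$. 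So it is a row-consecutive $k$-minor of $A$. The row ranges are admissible: $p_j+k \le \max(\mu)+k \le |\lambda| + k$, and since every $\mu\in\mathcal{D}_\lambda$ is dominated by $\lambda$ (\cref{lem:Kosta_number}, else $K_{\lambda,\mu}=0$), in fact $p_j \le \lambda_1 = s_k-k$, so $p_j+k \le s_k \le M$.

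Next, collect signs. The Kostka numbers are nonnegative integers, so $|A_{(s,q)}|$ is a nonnegative integer combination of row-consecutive $k$-minors. If all of these are $\geq 0$ (or all $\leq 0$), then so is the sum. This gives $k$-sign consistency, with the same sign for every choice of $(s,q)$.

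For the strict case we need at least one term with a positive coefficient. Take $\mu=\lambda$: we have $K_{\lambda,\lambda}=1>0$ by \cref{lem:Kosta_number}, since $\lambda \trianglerighteq \lambda$, and $\permut{\lambda}$ is nonempty. Hence at least one strictly signed row-consecutive minor appears with a positive integer weight, and all other terms have the same weak sign. The sum therefore has the common strict sign.

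The step that needs the most care is the index bookkeeping. One must check that the row shifts $p_j$ stay within $(0:M-k)$, so that the terms really are row-consecutive minors of $A$ as defined in the paper, and that the restriction to the column subset $q$ is compatible with \cref{thm:skip_minor_decomp}. I would handle both via the dominance bound $\mu_1 \le \lambda_1$ above. The sign argument itself is routine.
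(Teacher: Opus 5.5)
Your proposal is correct and takes the paper's route: the paper calls this corollary an immediate consequence of \cref{thm:skip_minor_decomp} applied to the $k$ selected columns, combined with the nonnegativity of the Kostka numbers. Your additions fill in details the paper leaves implicit: the dominance bound $p_j \le \mu_1 \le \lambda_1 = s_k - k$, which keeps every row index within $(1:M)$, and the term $\mu = \lambda$ with $K_{\lambda,\lambda} > 0$, which handles the strict case.
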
 
Although \cref{cor:k_sign_row_consec} provides a sufficient certificate, it is typically inefficient: it requires computing far more minors than directly evaluating all $k$-minors, and it is often overly restrictive. For our purposes, however, i.e., the study of \(k\)-sign consistent Hankel matrices, this criterion is surprisingly close to the favorable checks that we want to show. The case \(k=2\) already illustrates this phenomenon.
\begin{example}\label{exm:hankel_k_2}
Let us apply \cref{thm:skip_minor_decomp} to any two columns of
\begin{equation}
   A := H^g(1,M,N) = \begin{bmatrix}
        g_1 & g_{i+1} & \dots & g_{N}\\
        g_{2} & g_{i+2} & \dots & g_{N+1}\\
        \vdots & \vdots & & \vdots \\
        g_{M} & g_{M+1} & \dots & g_{M+N-1}
    \end{bmatrix}, \; M,N \geq 2.
\end{equation}
Similar to \cref{rem:skip_minor_decomp_n_2}, it sufficies to consider the $2$-minors $|H^g(1,s,r)|$, $s \in \mathcal{I}_{M,2}$ and 
$r \in \mathcal{I}_{N,2}$ with $s_1 =r_1 = 1$, because all other cases follow by deleting the first $s_1-1$ rows and $r_1-1$ columns of $A$. 

Thus, by \cref{rem:skip_minor_decomp_n_2}
\begin{align}
     |A_{(s,r)}| &= \begin{vmatrix}
            g_{1} & g_{r_2}\\
            g_{s_2} & g_{s_2+r_2-1}
        \end{vmatrix} = \sum_{i=0}^{s_2-2}\begin{vmatrix}
            g_{i+1} & g_{s_2-2-i+r_2}\\
            g_{i+2} & g_{s_2-1-i+r_2}
        \end{vmatrix},  \label{eq:Hankel_decomp_k_2}
\end{align}
which shows that all $2$-minors of $A$ can be expressed as a linear combination of $2$-minors of the form 
\begin{equation*}
    b^g_{i_1,i_2} := \begin{vmatrix}
        g_{i_1} & g_{i_2}\\
        g_{i_1+1} & g_{i_2+1}
    \end{vmatrix} , \; i \in \mathcal{I}_{M+N-2,2},
\end{equation*}
with a negative sign if $i+1 > s_2-2-i+r_2$, i.e., $2i>s_2+r_2-3$. If $s_2 \leq r_2+1$, then every term in our expansion is a regular $2$-minor of $H^g(1,2,M+N-2)$. Moreover, if $s_2 \geq r_2+1$, then
\begin{align*}
   |A_{(s,r)}| &= \sum_{i=0}^{s_2-2}\begin{vmatrix}
            g_{i+1} & g_{s_2-2-i+r_2}\\
            g_{i+2} & g_{s_2-1-i+r_2}
        \end{vmatrix}
        = \sum_{i=0}^{\floor{\frac{s_2+r_2-3}{2}}} \begin{vmatrix}
            g_{i+1} & g_{s_2-2-i+r_2}\\
            g_{i+2} & g_{s_2-1-i+r_2}
        \end{vmatrix} - \sum_{i= \floor{\frac{s_2+r_2-3}{2}}+1}^{s_2-2} \begin{vmatrix}
          g_{s_2-2-i+r_2} &  g_{i+1} \\
          g_{s_2-1-i+r_2} &  g_{i+2} 
        \end{vmatrix} \\ &= \sum_{i=0}^{\floor{\frac{s_2+r_2-3}{2}}} \begin{vmatrix}
            g_{i+1} & g_{s_2-2-i+r_2}\\
            g_{i+2} & g_{s_2-1-i+r_2}
        \end{vmatrix} - \sum_{i= r_2-1}^{\ceil{\frac{s_2+r_2-3}{2}}-1} \begin{vmatrix}
             g_{i+1} & g_{s_2-2-i+r_2}\\
            g_{i+2} & g_{s_2-1-i+r_2}
        \end{vmatrix} = \sum_{i=0}^{r_2-2} \begin{vmatrix}
            g_{i+1} & g_{s_2-2-i+r_2}\\
            g_{i+2} & g_{s_2-1-i+r_2}
        \end{vmatrix},
\end{align*}
where the last equality removes a zero $2$-minor of the first sum in case that $s_2+r_2-3$ is even. Hence, all $2$-minors of $A$ are nonnegative linear integer combinations of the $2$-mionors $b^g_{i_1,i_2}$, $i \in \mathcal{I}_{M+N-2,2}$, or equivalently, of the $2$-minors of $H^g(1,2,M+N-2)$. 

This shows that $H^g(1,M,N)$ is (strictly) $2$-sign consistent if the reshaped Hankel matrix $H^g(1,2,M+N-2)$ is (strictly) $2$-sign consistent. In case of $\Hank{g}$, this is also a necessary condition, since all $2$-minors of $H^g(1,2,2N-2)$ are two $2$-minors of $H^g(1,2N-2,2N-2)$.
\end{example}

In the following, we extend the conclusions of \cref{exm:hankel_k_2} to the cases with \(k>2\). Let \(A = H^g(1,M,N)\), \(s \in \mathcal{I}_{M,k}\), and \(r \in \mathcal{I}_{N,k}\). Then \cref{eq:skip_minor_decomp} specializes to
\begin{equation}
    |A_{(s,r)}| = \sum_{{\mu \in \mathcal{D}_\lambda}} K_{\lambda,\mu} D_{\mu,r}
\label{eq:Hankel_Kosta_decomp}
\end{equation}
where $\lambda_i = s_{k-i+1} -(k-i+1)$ and 
\begin{equation}
        D_{\mu,r} := \sum_{p \in \permut{\mu}} b^g_{(p_1+r_1,\dots,p_k+r_k)} 
        \label{eq:D_mu_c}
\end{equation}
with
\begin{equation*}
     b^g_{(i_1,\dots,i_k)} := \begin{vmatrix}
        g_{i_1} & \dots & g_{i_k} \\
        \vdots &  & \vdots\\
            g_{i_{1}+k-1} & \dots & g_{i_k+k-1}
    \end{vmatrix}, \; i \in (1:M+N-k)^k. 
\end{equation*}
As in \cref{exm:hankel_k_2}, this shows that every \(k\)-minor of \(A\) is a linear combination of the \(k\)-minors of the reshaped Hankel matrix \(H^g(1,k,M+N-k)\). Thus, it remains to show that -- after possible cancellations -- each such \(k\)-minor appears with a nonnegative coefficient in \cref{eq:Hankel_Kosta_decomp}.

To this end, we identify which \(\mu \in \mathcal{D}_\lambda\) (up to column permutations) contribute to the same minor \(b^g_{(i_1,\dots,i_k)}\). For fixed $\mu^\ast \in \mathcal{D}_\lambda$ and $p^\ast \in \permut{\mu^\ast}$, the $k$-minor $b^g_{(p_1^\ast+r_1,\dots,p_k^\ast+r_k)}$ appears in \cref{eq:Hankel_Kosta_decomp} for other choices of $\mu \in \mathcal{D}_\lambda$ and $p \in \permut{\mu}$, up to column permutation, if and only if $$r + p \in \permut{v^\ast},$$ where $v^\ast := (r + p^\ast)$. Define $\mathcal{Q}(v^\ast) := \{q \in \permut{v^\ast}: q - r \geq 0 \}$, then the set of all such $\mu$ is given by 
\begin{equation}
\{(q - r)^\downarrow: q \in \mathcal{Q}(v^\ast) \} \label{eq:set_all_mu}
\end{equation}
with corresponding $k$-minors $b^g_{q}$ appearing in $D_{(q-r)^\downarrow,r}$ in \cref{eq:D_mu_c}. Since \eqref{eq:set_all_mu} is nonempty if and only if \({v^\ast}^\uparrow \in \mathcal{Q}(v^\ast)\), and since we only need to consider \(v^\ast\) with distinct entries (otherwise \(b^g_{v^\ast}=0\)), we may use the identity
\begin{equation*}
    \forall q \in \mathcal{Q}(v^\ast): b^g_{q} = \sign_{{v^\ast}^\uparrow}(q)b^g_{{v^\ast}^\uparrow}.
\end{equation*}
Thus,
\begin{equation}
    |A_{(s,r)}| = \sum_{{v^\ast \in V^\ast}} \sum_{ q \in \mathcal{Q}(v^\ast)} \sign_{{v^\ast}}(q) K_{\lambda,(q-r)^\downarrow} b^g_{v^\ast},\label{eq:minor_decomp_nonneg}
\end{equation}
where $$V^\ast := \{(p+r)^\uparrow: p \in \permut{\mu}, \; \mu \in \mathcal{D}_\lambda, p+r \text{ has distinct entries}\}.$$ Since each \(b^g_{v^\ast}\) is a \(k\)-minor of \(H^g(1,k,M+N-k)\), the desired nonnegative integer combination exists provided that 
\begin{equation*}
  \forall v^\ast \in V^\ast:  \sum_{ q \in \mathcal{Q}(v^\ast)} \sign_{{v^\ast}}(q) K_{\lambda,(q-r)^\downarrow} \geq 0.
\end{equation*}
This inequality follows from the next lemma.
\begin{lem}\label{lem:Kostka_zero_sum}
Let \(\lambda \in \mathcal{D}_{M,k}\), \(r \in \mathcal{I}_{N,k}\), and

\[
v^\ast \in
\{\, (p+r)^\uparrow :
    p \in \permut{\mu},\;
    \mu \in \mathcal{D}_\lambda,\;
    (p+r)^\uparrow_i > (p+r)^\uparrow_{i-1}
    \text{ for all } i \in (2:k)
\}.
\]
Then
\begin{equation}\label{eq:kostka_sum_null}
\sum_{q \in \mathcal{Q}(v^\ast)}
\sign_{v^\ast}(q)\,
K_{\lambda,(q-r)^\downarrow}
=
\sum_{p \in \mathcal{P}(v^\ast)}
\sign_r(p)\,
K_{\lambda,(v^\ast - p)^\downarrow}
=
c^{\gamma^\ast}_{\lambda,\varepsilon},
\end{equation}
where
\[
\mathcal{Q}(v^\ast)
:= \{\, q \in \permut{v^\ast} : q - r \ge 0 \,\},\qquad
\mathcal{P}(v^\ast)
:= \{\, p \in \permut{r} : v^\ast - p \ge 0 \,\},
\]
and
\[
\gamma_i^\ast := v^\ast_{k-i+1} - k + i,
\qquad
\varepsilon_i := r_{k-i+1} - k + i,
\qquad
i \in (1:k).
\]

\end{lem}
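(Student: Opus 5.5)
The plan is to read \cref{eq:kostka_sum_null} as a comparison of coefficients of a single monomial in the Littlewood--Richardson product \cref{eq:LRrule}. We write \(s_\lambda s_\varepsilon\) as \(s_\lambda\) times an alternant, expand both sides in monomials, and extract the coefficient of \(x^{v^\ast}\).

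\emph{First equality.} This is a reindexing. Given \(q \in \mathcal{Q}(v^\ast)\), let \(\sigma\) be the permutation with \(q = v^\ast_\sigma\). Set \(p := r_{\sigma^{-1}}\). Then \(v^\ast - p\) is a rearrangement of \(q - r\). Hence \(v^\ast - p \ge 0\) if and only if \(q - r \ge 0\), and \((v^\ast-p)^\downarrow = (q-r)^\downarrow\). Both \(v^\ast\) and \(r\) have distinct entries, so \(q \mapsto p\) is a bijection \(\mathcal{Q}(v^\ast) \to \mathcal{P}(v^\ast)\) with \(\sign_{v^\ast}(q) = \sign(\sigma) = \sign(\sigma^{-1}) = \sign_r(p)\).

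\emph{Second equality.} Let \(\delta := (k-1,\dots,1,0)\).
\begin{enumerate}
\item By \cref{eq:schur_kostka}, and since \(K_{\lambda,\alpha}\) depends only on \(\alpha^\downarrow\), we have \(s_\lambda(x) = \sum_{\alpha} K_{\lambda,\alpha^\downarrow} x^\alpha\), summed over all \(\alpha \in \mathds{Z}_{\ge 0}^k\) with \(\sum_i\alpha_i = |\lambda|\).
\item By definition \(\varepsilon_i + (k-i) = r_{k-i+1}\), so the exponents of \(a_{\varepsilon+\delta}\) are exactly the entries of \(r\). Likewise \(\gamma^\ast + \delta\) has exactly the entries of \(v^\ast\).
\item By \cref{eq:alt_poly_leibniz}, \(a_{\varepsilon+\delta}(x) = \pm\sum_{p \in \permut{r}} \sign_r(p)\, x^p\). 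The global sign is fixed by the paper's row ordering in \(a_\mu\).
\item Multiply \cref{eq:LRrule} (with \(\mu=\varepsilon\)) by \(a_\delta\) and use \cref{eq:schur_poly}. This gives \(s_\lambda\, a_{\varepsilon+\delta} = \sum_\gamma c^\gamma_{\lambda,\varepsilon}\, a_{\gamma+\delta}\).
\end{enumerate}

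Now compare coefficients of \(x^{v^\ast}\), using that \(v^\ast\) is strictly increasing. On the left, the coefficient is \(\pm\sum_{p \in \permut{r},\, v^\ast - p \ge 0} \sign_r(p) K_{\lambda,(v^\ast-p)^\downarrow}\), which is the middle sum in \cref{eq:kostka_sum_null}. On the right, \(x^{v^\ast}\) occurs only in \(a_{\gamma+\delta}\) with \(\gamma + \delta\) a rearrangement of \(v^\ast\), that is, only for \(\gamma=\gamma^\ast\). There it appears once, with the same global sign \(\pm\) as before, since \(v^\ast\) and \(r\) are both taken in increasing order. Cancelling the common sign yields \(c^{\gamma^\ast}_{\lambda,\varepsilon}\).

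\emph{Technical points.}
\begin{itemize}
\item Entries of \(r\) may start at \(r_1 \ge 1\) rather than \(0\), so \(\varepsilon\) may not be a partition starting from \(\varepsilon_k=0\). This is handled by a uniform shift: multiplying both alternants by \((x_1\cdots x_k)^c\) changes nothing.
\item The hypotheses guarantee \(\gamma^\ast\) is weakly decreasing (since \(v^\ast\) is strictly increasing) and \(|\gamma^\ast| = |\lambda|+|\varepsilon|\). If \(\gamma^\ast \notin \mathcal{D}_{\lambda+\varepsilon}\) in the paper's sense, the coefficient is interpreted as \(0\).
\end{itemize}

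\emph{Main obstacle.} I expect the main difficulty to be the sign bookkeeping. One must check that the paper's reversed-row convention in \(a_\mu\) gives the same global sign for \(a_{\varepsilon+\delta}\) and \(a_{\gamma^\ast+\delta}\) when both are read in increasing order. One must also check that \(\sign_r\), \(\sign_{v^\ast}\), and the Leibniz sign in \cref{eq:alt_poly_leibniz} all refer to the same orientation. I would first verify this on the \(k=2\) computation of \cref{exm:hankel_k_2}, where the answer (a coefficient equal to \(0\) or \(1\)) is known.
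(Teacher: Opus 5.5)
Your proposal is correct and follows the paper's proof essentially step for step. The first equality comes from the sign-preserving bijection $q=v^\ast_\sigma \leftrightarrow p=r_{\sigma^{-1}}$ between $\mathcal{Q}(v^\ast)$ and $\mathcal{P}(v^\ast)$. The second comes from expanding $s_\lambda$ in monomials through Kostka numbers, multiplying \cref{eq:LRrule} by the Vandermonde alternant, and extracting the coefficient of $x^{v^\ast}$, to which only $\gamma=\gamma^\ast$ can contribute because $v^\ast$ is strictly increasing.

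Your caution about the global sign is justified: with the paper's reversed row ordering, \cref{eq:alt_poly_leibniz} holds only up to a factor $(-1)^{k(k-1)/2}$. As you observe, this factor enters both alternants identically and cancels, so your argument goes through as written.
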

\begin{proof}
We begin by showing the first equality in \cref{eq:kostka_sum_null}. Observe that
\begin{equation*}
    \{(q - r)^\downarrow: q \in \mathcal{Q}(v^\ast) \} = \{(v^\ast - p)^\downarrow: p \in \mathcal{P}(v^\ast) \},
\end{equation*}
since sorting removes any dependence on the order of the entries. By the assumption on \(v^\ast\), each of the sets
\begin{equation*}
\{q-r: q \in \mathcal{Q}(v^\ast)\} \quad \text{and} \quad \{v^\ast-p: p \in \mathcal{P}(v^\ast)\}
\end{equation*}
consists of distinct elements, i.e., for every $p \in \mathcal{P}(v^\ast)$ there exist unique $q \in \mathcal{Q}(v^\ast)$ and $\sigma \in \permut{(1:k)}$ such that $$(v^\ast - p)_{\sigma} = v^\ast_{\sigma} -p_{\sigma} = q - r.$$ As this equation is fulfilled by $p_{\sigma} = r$ and $q = v^\ast_{\sigma}$, we conclude that $\sign_{v^\ast}(q) = \sign_r(p)$. This proves the first equality in \cref{eq:kostka_sum_null}.  
Next, define the polynomial
\begin{equation}
  F(x) :=  s_\lambda(x) \sum_{p \in \permut{r}} \sign_r(p) x^{p} = \sum_{p \in \permut{r}} \sum_{\mu \in \mathcal{D}_\lambda}    \sum_{q \in \permut{\mu}} \sign_r(p) K_{\lambda,\mu} x^{q+p}, \label{eq:poly_x_v}
\end{equation}
where equality follows by \cref{eq:schur_kostka}. Since $\{(v^\ast-p)^\downarrow: p \in \mathcal{P}(v^\ast)\} \subset \mathcal{D}_\lambda$ and $v^\ast-p \in \permut{(v^\ast-p)^\downarrow}$, we see that $x^{q+p} = x^{v^\ast}$ for some $q \in \permut{\mu}$ if and only if $p \in \mathcal{P}(v^\ast)$ and $\mu = (v^\ast-p)^\downarrow$. Therefore, $\sum_{p \in \mathcal{P}(v^\ast)} \sign_r(p) K_{\lambda,(v^\ast - p)^\downarrow}$ is exactly the coefficient of $x^{v^\ast}$ in $F(x)$. By \cref{eq:alt_poly_leibniz,eq:schur_poly}, it follows that
\begin{equation*}
    \sum_{p \in \permut{r}} \sign_r(p) x^{p} = a_{r}(x) = s_\varepsilon(x) a_{(0,1,\dots,k-1)} 
\end{equation*}
and, therefore, using \cref{eq:LRrule,eq:alt_poly_leibniz},
\begin{align*}
 F(x) &= s_\lambda(x) s_\varepsilon(x) a_{(0,1,\dots,k-1)}(x) =
    \sum_{\gamma \in \mathcal{D}_{\lambda+\varepsilon}} c^{\gamma}_{\lambda,\varepsilon} s_\gamma(x) a_{(0,1,\dots,k-1)}(x)\\ 
    &= 
    \sum_{\gamma \in \mathcal{D}_{\lambda+\varepsilon}} c^{\gamma}_{\lambda,\varepsilon} a_{(\gamma_k,\gamma_{k-1}+1,\dots,\gamma_1+k-1)}.
\end{align*}
Since \(v^\ast\) has strictly increasing entries, the monomial \(x^{v^\ast}\) appears in \(F(x)\) if and only if \[ a_{(\gamma_k,\gamma_{k-1}+1,\dots,\gamma_1+k-1)} = a_{v^\ast} \] for some \(\gamma \in  \mathcal{D}_{\lambda+\varepsilon}\) with \(c^{\gamma}_{\lambda,\varepsilon} \neq 0\). Thus, the coefficient of \(x^{v^\ast}\) in \(F(x)\) is exactly \(c^{\gamma^\ast}_{\lambda,\varepsilon}\), proving \[ \sum_{p \in \mathcal{P}(v^\ast)} \sign_r(p)\, K_{\lambda,(v^\ast - p)^\downarrow} = c^{\gamma^\ast}_{\lambda,\varepsilon}. \]
\end{proof} 
We illustrate \cref{lem:Kostka_zero_sum} with two numerical examples. 
\begin{example}
    Let $\lambda = (4,1,0)$, $r = (1,2,4)$, and consequently $\varepsilon = r^\downarrow - (2,1,0) = (2,1,0)$. We consider the following two cases:
\begin{enumerate}
    \item \textbf{Case 1 \(v^\ast = (1,4,7)\):} Here $\gamma^\ast = (5,3,1)$, so by \cref{lem:Kostka_zero_sum} it is possible that \cref{eq:kostka_sum_null} is positive. $Q(v^\ast)$ consists of 
    \begin{equation*}
    (0,2,3) = v^\ast-r \qquad \text{and} \qquad( 0,5,0) = (1,7,4)-r,
    \end{equation*}
    where $K_{\lambda,(5,0,0)} = 0$ by \cref{lem:Kosta_number} and $K_{\lambda,(3,2,0)} = 1$ with corresponding $\sign_{v^\ast}(v^\ast) = 1$. Hence, $\sum_{q \in \mathcal{Q}(v^\ast)} \sign_{v^\ast}(q)\, K_{\lambda,(q-r)^\downarrow} = 1$, which which agrees with the Littlewood–Richardson coefficient $c^{\gamma^\ast}_{\lambda,\varepsilon} = 1$.  

     \item \textbf{Case 2 \(v^\ast = (2,4,6)\):} In this case, $\gamma^\ast = (4,3,2)$, so again \cref{lem:Kostka_zero_sum} allows the possibility of a positive value. However, one can show that $c^{\gamma^\ast}_{\lambda,\varepsilon} = 0$. This can also be verified by noticing that $Q(v^\ast)$ is comprised of 
     \begin{equation*}
         (5,0,0) = (6,2,4) - r, \qquad (3,0,2) = (4,2,6) - r \qquad \text{and} \qquad (1,2,2) = v^\ast -r,
     \end{equation*}
     with corresponding
     \begin{equation*}
         \sign_{v^\ast}(6,2,4) K_{\lambda,(3,2,0)} = 1, \quad \sign_{v^\ast}(4,2,6) K_{\lambda,(4,1,0)} = -1, \quad \text{and} \quad \sign_{v^\ast}(v^\ast) K_{\lambda,(2,2,1)} = 2.
     \end{equation*}
     Thus, $\sum_{q \in \mathcal{Q}(v^\ast)} \sign_{{v^\ast}}(q) K_{\lambda,(q - r)^\downarrow} = 0$. 
\end{enumerate}
\end{example}
\begin{rem} \Cref{eq:kostka_sum_null} also appears in a different form in \cite[Corollary~1]{shrivastava2023littlewood}, namely 
\begin{equation}\label{eq:Kosta_sum_zero_alt} c^{\gamma}_{\lambda,\varepsilon} = \sum_{\substack{\eta \in \permut{(1,\dots,k)} \\ \psi \trianglelefteq \varepsilon}} \sign_{(1,\dots,k)}(\eta)\, K_{\varepsilon,\psi}, 
\end{equation}
where only nonnegative $\psi = ( (\gamma + (k-1,\dots,1,0))_{\eta} + \lambda - (k-1,\dots,1,0))^\downarrow$ are permitted. To see the equivalence with \cref{eq:kostka_sum_null}, note first that \( c^{\gamma}_{\lambda,\varepsilon} = c^{\gamma}_{\varepsilon,\lambda} \) by \cref{eq:LRrule}, so one may interchange the roles of \(\lambda\) and \(\varepsilon\) in order to express the sum using \(\lambda\)-dependent Kostka numbers, as in \cref{eq:kostka_sum_null}. For the choices of \(v^\ast\) and \(r\) (and corresponding \(\gamma\) and \(\varepsilon\)) in \cref{lem:Kostka_zero_sum}, we have
\begin{equation*}
    \gamma + (k-1,\dots,1,0) = {v^\ast}^\downarrow \quad \text{and} \quad \varepsilon + (k-1,\dots,1,0) = r^\downarrow
\end{equation*}
so that $\psi = {(({v^\ast}^\downarrow)_\eta - r^\downarrow)}^\downarrow$. Moreover, since \({v^\ast}^\downarrow - r^\downarrow \in \mathcal{D}_{\lambda}\), the condition \(\psi \trianglelefteq \lambda\) may be omitted by \cref{lem:Kosta_number}. Therefore, \cref{eq:Kosta_sum_zero_alt} becomes
\begin{align*}
    c^{\gamma}_{\lambda, \varepsilon} = c^{\gamma}_{\varepsilon, \lambda} &= \sum_{\substack{\eta \in \permut{(1,\dots,k)},\\ {({v^\ast}^\downarrow)_\eta-r^\downarrow }\geq 0} } \sign_{(1,\dots,k)}(\eta)\, K_{\lambda, (({v^\ast}^\downarrow)_{\eta} - r^\downarrow)^\downarrow}
    = \sum_{ q \in \mathcal{Q}({v^\ast}^\downarrow)} \sign_{{v^\ast}^\downarrow}(q) K_{\lambda,(q-r)^\downarrow} \\
    &= \sum_{ q \in \mathcal{Q}(v^\ast)} \sign_{{v^\ast}}(q) K_{\lambda,(q-r)^\downarrow},
\end{align*}
where the final equality uses the fact that \(v^\ast\) has distinct entries. Although the resulting identity is the same, our derivation differs substantially from that of~\cite{shrivastava2023littlewood}. Our approach is motivated by applications to Hankel matrices and requires significantly fewer prerequisites.
\end{rem}
In conclusion, since Littlewood–Richardson coefficients are nonnegative by definition, \cref{eq:minor_decomp_nonneg} shows that each minor \(|A_{(s,r)}|\) is a nonnegative linear combination of the minors $b^g_{(i_1,\dots,i_k)}, \; i \in \mathcal{I}_{M+N-k,k}$. Hence, if all $b^g_{(i_1,\dots,i_k)}$ share the same sign, then so do all $|A_{(s,r)}|$. The strict case follows similarly, since by \begin{equation}\label{eq:strict_LR_coeff} c^{\gamma(\lambda^\uparrow + r)}_{\lambda,\varepsilon} = c^{\lambda+\varepsilon}_{\lambda,\varepsilon} \ge 1 \end{equation} at least one strictly positive coefficient appears in \cref{eq:minor_decomp_nonneg}. 
This is summarized in the following main result.
\begin{thm}\label{thm:row_consec_Hankel} Let \(g \in \mathds{C}^{\mathds{Z}}\), $A := H^g(1,M,N) \in \mathds{C}^{M \times N}$ and $ k \le \min\{M,N\}$. For \(s \in \mathcal{I}_{M,k}\) and \(r \in \mathcal{I}_{N,k}\), define \[ b^g_{(i_1,\dots,i_k)} := \begin{vmatrix} g_{i_1} & \dots & g_{i_k} \\ \vdots & & \vdots \\ g_{i_1+k-1} & \dots & g_{i_k+k-1} \end{vmatrix}, \qquad (i_1,\dots,i_k) \in (1:M+N-k)^k \] Then \begin{equation}\label{eq:Hankel_minor_formula} |A_{(s,r)}| = \sum_{v^\ast \in V^\ast} c^{\gamma(v^\ast)}_{\lambda,\varepsilon}\, b^g_{v^\ast}, \end{equation} where \[ \lambda_i := s_{k-i+1} - (k-i+1), \qquad \varepsilon_i := r_{k-i+1} - k + i, \qquad \gamma_i(v^\ast) := v^\ast_{k-i+1} - k + i, \qquad i \in (1:k) \]  and \[ V^\ast := \bigl\{ (p+r)^\uparrow : p \in \permut{\mu},\; \mu \in \mathcal{D}_\lambda,\; (p+r)^\uparrow_i > (p+r)^\uparrow_{i-1} \text{ for all } i \in (2:k) \bigr\}. \] In particular, if the reshaped Hankel matrix \(H^g(1,k,M+N-k)\) is (strictly) \(k\)-sign consistent, i.e., all minors \(b^g_{(i_1,\dots,i_k)}\), \(i \in \mathcal{I}_{M+N-k,k}\), share the same (strict) sign, then \(A\) is (strictly) \(k\)-sign consistent. 
\end{thm}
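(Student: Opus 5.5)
The plan is to assemble the pieces already developed in this section. First, apply \cref{thm:skip_minor_decomp} to the $M \times k$ matrix $A_{(:,r)}$, whose column $j$ is $(g_{r_j}, g_{r_j+1},\dots,g_{r_j+M-1})^\transp$. For $s \in \mathcal{I}_{M,k}$ this gives $|A_{(s,r)}| = \sum_{\mu \in \mathcal{D}_\lambda} K_{\lambda,\mu} D_\mu$ with $\lambda_i = s_{k-i+1}-(k-i+1)$. By the Hankel structure, the row-consecutive minor selecting rows $p_j+1,\dots,p_j+k$ in column $j$ equals $b^g_{(p_1+r_1,\dots,p_k+r_k)}$. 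This is exactly \cref{eq:Hankel_Kosta_decomp} together with \cref{eq:D_mu_c}. The index range is fine: $p_j \le \lambda_1 \le M-k$, so $p_j+r_j \le M+N-k$.

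Next, regroup the terms by the sorted index vector $v^\ast = (p+r)^\uparrow$. Terms with repeated entries vanish because they have two equal columns, so only the set $V^\ast$ survives. For each $v^\ast \in V^\ast$, collect all pairs $(\mu,p)$ with $p+r \in \permut{v^\ast}$. These are parametrized by $q \in \mathcal{Q}(v^\ast)$ via $\mu = (q-r)^\downarrow$ and $p = q-r$, each $p$ occurring once in $D_\mu$. Using $b^g_q = \sign_{v^\ast}(q) b^g_{v^\ast}$ gives \cref{eq:minor_decomp_nonneg}.

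Then \cref{lem:Kostka_zero_sum} identifies the inner signed Kostka sum with $c^{\gamma(v^\ast)}_{\lambda,\varepsilon}$, which yields \cref{eq:Hankel_minor_formula}. The only care needed is that the hypotheses of \cref{lem:Kostka_zero_sum} match: $\lambda \in \mathcal{D}_{M,k}$ holds since $s_k \le M$, and $v^\ast$ has strictly increasing entries by the definition of $V^\ast$. I expect this bookkeeping, rather than any new idea, to be the main obstacle. In particular I must check that each $(\mu,p)$ is counted exactly once and that the sign convention $\sign_{v^\ast}(q)$ agrees with reordering the columns of $b^g_q$ into increasing order.

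For the sign-consistency statement, Littlewood--Richardson coefficients are nonnegative integers by definition as counts of tableaux. So if all $b^g_i$, $i \in \mathcal{I}_{M+N-k,k}$, are nonnegative (or all nonpositive), then every $|A_{(s,r)}|$ has the same sign. For the strict case, at least one coefficient must be positive. Take $\mu = \lambda$ and $p = \lambda^\uparrow$; then $p + r$ is strictly increasing, since $r$ is strictly increasing and $\lambda^\uparrow$ is weakly increasing, so $v^\ast = \lambda^\uparrow + r \in V^\ast$. Its $\gamma(v^\ast) = \lambda+\varepsilon$, and $c^{\lambda+\varepsilon}_{\lambda,\varepsilon} = 1$ by the standard fact that the highest-weight component appears exactly once, which is \cref{eq:strict_LR_coeff}. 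Hence $|A_{(s,r)}|$ is a nonnegative combination with at least one strictly signed term, so it is strictly signed, which concludes the argument.
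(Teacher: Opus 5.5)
Your proposal is correct and follows the paper's argument essentially step for step. You apply \cref{thm:skip_minor_decomp} to $A_{(:,r)}$ to get \cref{eq:Hankel_Kosta_decomp}, regroup by $v^\ast$ into \cref{eq:minor_decomp_nonneg}, and identify the signed Kostka sums with Littlewood--Richardson coefficients via \cref{lem:Kostka_zero_sum}. For the strict case you use $v^\ast = \lambda^\uparrow + r$ with $c^{\lambda+\varepsilon}_{\lambda,\varepsilon} \ge 1$, exactly as the paper does.

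One small refinement: your bound $p_j \le \lambda_1$ does not hold for every $\mu \in \mathcal{D}_\lambda$. It holds only for the terms that actually contribute, namely those with $K_{\lambda,\mu} \neq 0$, because then $\lambda \trianglerighteq \mu$ by \cref{lem:Kosta_number}.
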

Note that \cref{thm:row_consec_Hankel} provides only a sufficient condition, since it requires (strict) \(k\)-sign consistency of a reshaped Hankel matrix. In the finite-dimensional setting this condition may be stronger than necessary (see the circulant matrix case below). However, in the infinite-dimensional operator case, the situation changes fundamentally: the condition becomes not only sufficient but also necessary. This leads us to the following analogue of \cref{cor:k_pos_Hankel}. 
\begin{cor}\label{cor:Hankel_op_k_sign}
Let $g \in \mathds{C}^\mathds{Z}$. Then, the following are equivalent:
\begin{enumerate}
    \item $\mathcal{H}^g$ is (strictly) $k$-sign consistent. 
    \item For all $N \geq k$: $H^g(1,k,N)$ is (strictly) $k$-sign consistent. 
\end{enumerate}
\end{cor}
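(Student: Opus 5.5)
The plan is to prove the two implications separately. Sufficiency will come from \cref{thm:row_consec_Hankel}, and necessity from the observation that every minor of the reshaped matrix $H^g(1,k,N)$ is already a minor of some square Hankel section $H^g(1,N',N')$. The only subtlety concerns the common sign: it must be the same for all $N$, both in the operator definition and in item 2. I will therefore read ``(strictly) $k$-sign consistent for all $N$'' as meaning that all the relevant minors, taken over all $N$, share one sign. This is automatic in any case, since the families are nested: $H^g(1,N,N)$ is a submatrix of $H^g(1,N+1,N+1)$, and $H^g(1,k,N)$ is a submatrix of $H^g(1,k,N+1)$. So as soon as some minor is nonzero, its sign fixes the sign for every larger $N$. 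In the strict case every minor is nonzero, so the sign is fixed from the start.

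\emph{(ii)$\Rightarrow$(i).} Fix $N \geq k$ and consider $A = H^g(1,N,N)$. By \cref{thm:row_consec_Hankel} with $M=N$, every $k$-minor of $A$ is a nonnegative integer combination $\sum_{v^\ast} c^{\gamma(v^\ast)}_{\lambda,\varepsilon} b^g_{v^\ast}$. Each $b^g_{v^\ast}$ is a $k$-minor of $H^g(1,k,2N-k)$, and $2N-k \geq k$. By (ii), together with the nestedness remark above, all these $b^g$ share one (strict) sign, independent of $N$. Hence every $k$-minor of $A$ has that sign as well. In the strict case we need at least one positive coefficient multiplying a nonzero $b^g$. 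This is supplied by \eqref{eq:strict_LR_coeff}: the term $v^\ast = \lambda^\uparrow + r$ lies in $V^\ast$, because $\lambda^\uparrow$ is weakly increasing and $r$ is strictly increasing, so the sum is strictly increasing. Its coefficient is $c^{\lambda+\varepsilon}_{\lambda,\varepsilon} \geq 1$, and all the remaining terms carry the same strict sign. Therefore $A$ is strictly $k$-sign consistent.

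\emph{(i)$\Rightarrow$(ii).} Fix $N \geq k$. The matrix $H^g(1,k,N)$ consists of the first $k$ rows and first $N$ columns of $H^g(1,N,N)$. Any $k$-minor of $H^g(1,k,N)$ is therefore a $k$-minor of $H^g(1,N,N)$. By (i) it has the common (strict) sign, and (ii) follows.

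The main work is thus already contained in \cref{thm:row_consec_Hankel}. The only step needing care is the strict case of sufficiency: one must check that $\lambda^\uparrow + r$ really belongs to $V^\ast$, i.e.\ that it has distinct entries, and that it arises from some $p \in \permut{\mu}$ with $\mu \in \mathcal{D}_\lambda$. Taking $\mu = \lambda$ and $p = \lambda^\uparrow$ gives this directly.
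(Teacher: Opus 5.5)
Your proposal is correct and follows the paper's proof. Sufficiency comes from \cref{thm:row_consec_Hankel} applied to $H^g(1,N,N)$ with the reshaped matrix $H^g(1,k,2N-k)$, and necessity comes from $H^g(1,k,N)$ being a submatrix of $H^g(1,N,N)$. You also make explicit two points the paper leaves implicit, and both are handled correctly: the common sign across $N$ (via nestedness), and the strict case (via the term $v^\ast=\lambda^\uparrow+r\in V^\ast$ with coefficient $c^{\lambda+\varepsilon}_{\lambda,\varepsilon}\ge 1$).
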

\begin{proof}
   By definition, \(\mathcal{H}^g\) is (strictly) \(k\)-sign consistent if and only if \(H^g(1,N,N)\) is (strictly) \(k\)-sign consistent for every \(N \ge k\). However, by \cref{thm:row_consec_Hankel}, this requirement is already satisfied provided that \(H^g(1,k,2N-k)\) is (strictly) \(k\)-sign consistent for all \(N \ge k\). Thus the two conditions are equivalent.
\end{proof}
At this point, it is worth noting that the class of \(k\)-sign consistent Hankel matrices (and operators) is strictly larger than the class of \(k\)-positive Hankel matrices (and operators).  
Indeed, \(k\)-sign consistency does not require \(1\)-sign consistency.  
The following example illustrates this.

\begin{example}\label{ex:Hankel_2_sign_not_2_pos}
Let \(g_t = \alpha_1^{t-1} + k\, \alpha_2^{t-1}\) for \(t \ge 1\), where  
\(k, \alpha_1, \alpha_2 \in \mathds{R} \setminus \{0\}\).  
Then, for all \(t,j \in \mathds{Z}_{>0}\),
\begin{equation}\label{eq:2_minor_impulse}
    b^g_{(t,t+j)}
    =
    p\, \alpha_1^{t-1} \alpha_2^{t-1}
    \bigl(\alpha_1^j - \alpha_2^j\bigr)
    (\alpha_1 - \alpha_2).
\end{equation}
Thus \(H^g(1,M,N)\) is strictly \(2\)-sign consistent for any \(M,N \ge 2\)  
if and only if \(\alpha_1, \alpha_2 > 0\).  However, strict \(1\)-sign consistency fails in general.  
For example, choosing \(\alpha_1 = 1\), \(\alpha_2 = 0.5\), and \(p = -3\) yields
\[
    (g_1, g_2, g_3) = (-2,\,-0.5,\,0.25),
\]
which changes sign.  
Hence \(H^g(1,M,N)\) is \(2\)-sign consistent but not \(1\)-sign consistent.
\end{example}
Unfortunately, unlike \cref{cor:k_pos_Hankel}, there is in general no further simplification that would allow us to check only, for example, consecutive \(k\)-minors of \(H^g(1,k,M+N-k)\).  Indeed, in \cref{ex:Hankel_2_sign_not_2_pos}, if one were to impose strict sign consistency solely on consecutive \(2\)-minors (i.e., by choosing \(j=1\) in \cref{eq:2_minor_impulse}), then the choice \(\alpha_1, \alpha_2 \in \mathds{R}_{<0}\) would also satisfy this restricted condition, despite being incorrect for full \(2\)-sign consistency.

Yet, it should be noted that \cref{thm:row_consec_Hankel} enables the use of \cref{lem:pena_bijection}. In particular, if \(H^g(1,k,k)\) is invertible, then the (strict) \(k\)-sign consistency of  
\(H^g(1,M,N)\), \(M,N \ge k\), can be verified by checking the (strict) total positivity of
\[
    K_k^\transp\, H^g(1,k,k)^{-1}\, H^g(k+1,k,M+N-k),
\]
where \(K_k\) is defined in \cref{lem:pena_bijection}.  
This verification can be performed efficiently using, for example, \cref{prop:row_col_initial_minors_strict_tot_pos}.  
Then, employing Dodgson condensation as in \cite{carter2021complexity}, the overall computational complexity reduces to \(\mathcal{O}\!\left(k^{2}(M+N)\right)\) whenever \(k \le N + M - k\).

\subsubsection{Toeplitz Matrices}
Since reversing the column order of a Toeplitz matrix produces a Hankel matrix without affecting its (strict) \(k\)-sign consistency, direct analogues of \cref{thm:row_consec_Hankel,cor:Hankel_op_k_sign} hold in the Toeplitz setting.
\begin{cor}\label{cor:toep} Let \(g \in \mathds{C}^{\mathds{Z}}\), and \(A := T^g(t,M,N) \in \mathds{C}^{M \times N} \) with \(k \le \min\{M,N\}\). If the reshaped Toeplitz matrix \(T^g(t+M-k,k,M+N-k)\) is (strictly) \(k\)-sign consistent, then \(A\) is (strictly) \(k\)-sign consistent. In particular, the Toeplitz operator \(\Toep{g}\) is (strictly) \(k\)-sign consistent if and only if \(T^g(N-k,k,2N-k)\) is (strictly) \(k\)-sign consistent for all \(N \ge k\). \end{cor}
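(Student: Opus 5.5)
The plan is to reduce everything to \cref{thm:row_consec_Hankel} and \cref{cor:Hankel_op_k_sign} by reversing the column order. Let $J_N$ denote the $N\times N$ exchange matrix, i.e., $J_N = (\delta_{i,N+1-j})$. Column reversal changes every $k$-minor only by a common sign: for $r\in\mathcal{I}_{N,k}$, the minor with columns $r$ of $AJ_N$ equals the minor of $A$ with columns $(N+1-r_k,\dots,N+1-r_1)$ times $(-1)^{k(k-1)/2}$. Since $r\mapsto (N+1-r)^\uparrow$ is a bijection of $\mathcal{I}_{N,k}$, the multiset of $k$-minors of $AJ_N$ is that of $A$ multiplied by $(-1)^{k(k-1)/2}$. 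Hence (strict) $k$-sign consistency is invariant under $A\mapsto AJ_N$.

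Next I identify the Hankel matrix. The entry $(i,j)$ of $T^g(t,M,N)$ is $g_{t+i-j}$, so the entry $(i,j)$ of $T^g(t,M,N)J_N$ is $g_{t+i-N-1+j}$. With $h_\tau := g_{\tau+t-N-1}$, this is $h_{i+j}$, i.e., $T^g(t,M,N)J_N = H^{\tilde h}(1,M,N)$ with $\tilde h_\tau := g_{\tau+t-N}$; shifting the sequence index is harmless since \cref{thm:row_consec_Hankel} holds for arbitrary $g\in\mathds{C}^\mathds{Z}$. Applying \cref{thm:row_consec_Hankel} to $\tilde h$, it suffices that $H^{\tilde h}(1,k,M+N-k)$ is (strictly) $k$-sign consistent. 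Its entries are $\tilde h_{i+j-1} = g_{t-N+i+j-1}$, for $i\in(1:k)$ and $j\in(1:M+N-k)$.

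I then reverse its columns again. The entry $(i,j)$ of $H^{\tilde h}(1,k,M+N-k)J_{M+N-k}$ is $g_{t-N+i+(M+N-k+1-j)-1} = g_{(t+M-k)+(i-1)-(j-1)}$, which is exactly the $(i,j)$ entry of $T^g(t+M-k,k,M+N-k)$. By the column-reversal invariance above, its (strict) $k$-sign consistency is equivalent to that of the Hankel matrix, which gives the first claim. The only delicate step is this index bookkeeping: getting the shift $t+M-k$ right, and checking that the theorem's statement for general $g$ absorbs the offset. I would double-check it on the entry $(1,1)$, which should equal $g_{t+M-k}$, and on $(k,M+N-k)$, which should equal $g_{t-N+1}$, the last row's bottom-right-most index of $A$ after reversal.

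For the operator statement, I would argue exactly as in \cref{cor:Hankel_op_k_sign}. By definition, $\Toep{g}$ is (strictly) $k$-sign consistent if and only if every $T^g(0,N,N)$ with $N\ge k$ is. Sufficiency follows from the first part with $t=0$ and $M=N$, giving the reshaped matrix $T^g(N-k,k,2N-k)$. Necessity follows because $T^g(N-k,k,2N-k)$ is a submatrix of $T^g(0,2N-k,2N-k)$: it consists of rows $N-k+1,\dots,N$ of that matrix, since row $i$ of $T^g(0,\cdot,\cdot)$ begins at $g_{i-1}$. So every $k$-minor of $T^g(N-k,k,2N-k)$ is a $k$-minor of a matrix assumed to be sign consistent, and all of these minors share that matrix's common sign.
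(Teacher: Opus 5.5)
Your argument is correct and follows the paper's route: reverse the columns to turn $T^g(t,M,N)$ into $H^{\tilde h}(1,M,N)$, apply \cref{thm:row_consec_Hankel}, reverse back to obtain $T^g(t+M-k,k,M+N-k)$, and get necessity for $\Toep{g}$ from the embedding of $T^g(N-k,k,2N-k)$ as rows of $T^g(0,2N-k,2N-k)$; your index bookkeeping is right. One slip, which does not affect the proof, is in your proposed sanity check: the $(k,M+N-k)$ entry of $T^g(t+M-k,k,M+N-k)$ is $g_{t-N+k}$, while $g_{t-N+1}$ (the top-right entry of $A$) sits at position $(1,M+N-k)$.
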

Unfortunately, for circulant matrices \(C^g = T^g(0,T,T)\) with \(g \in \ell_\infty(T)\), \cref{cor:toep} is often too conservative. Indeed, the reshaped Toeplitz $B := T^g(T-k,k,2T-k)$ is required to be (strictly) \(k\)-sign consistent. Due to the \(T\)-periodicity of \(g\), this implies, e.g., that
\begin{equation*}
    |B_{(:,(1:k))}| \qquad \text{and} \qquad |B_{(:,(2:k)\cup \{T+1\})}| = -|B_{(:,(1:k))}|
\end{equation*}
must share the same (strict) sign whenever \(k < T\). Hence, in the strict case, \cref{cor:toep} cannot be applied. 
\begin{example}
As shown in \cite{grussler2025discrete}, an example of a $g \in \ell_\infty(T)$ with strictly $3$-sign consistent $C^g$ is given by
    \begin{equation*}
        g_t = \frac{1}{1-0.9^T}0.9^{t-1} - \frac{2}{1-0.8^T} 0.8^{t-1} + \frac{1}{1-0.7^T}0.7^{t-1}, \; t \in (1:T).
    \end{equation*}
    In particular, if $T = 4$, then 
    \begin{equation*}
        B = T^g(1,3,5) = \begin{bmatrix}
            g_1 & g_0 & g_3 & g_2 & g_1 \\
            g_2 & g_1 & g_4 & g_3 & g_2 \\
            g_3 & g_2 & g_0 & g_4 & g_3
        \end{bmatrix}
    \end{equation*}
 with $0 < |B_{(:,(1:3))}| = -|B_{(:,(2:3)\cup \{5\})}|$, i.e., $B$ is not $3$-sign consistent. Thus, \cref{cor:toep} cannot be used to certify strict $3$-sign consistentcy of $C^g$. 
\end{example}
In the non-strict case, however, partial extensions are still possible. To see this, observe that circulant matrices cannot be \(k\)-sign consistent for even \(k\), except in trivial cases. The following example illustrates this obstruction.
\begin{example}
   The circulant matrix
    \begin{equation*}
        C^g = \begin{bmatrix}
            g_0 & g_2 & g_1 \\
            g_1 & g_0 & g_2\\
            g_2 & g_1 & g_0
        \end{bmatrix}
    \end{equation*}
    contains the $2$-minors 
    \begin{equation*}
        |C^g_{((1:2),(2:3))}| = g_2^2 - g_0g_1 = -|C^g_{(\{1,3\},(1:2))}| \qquad \text{and} \qquad |C^g_{((2:3),(1:2))}| = g_1^2 - g_0g_2 = -|C^g_{(\{1,3\},(2:3))}|.
    \end{equation*}
  $C^g$ can, thus, not be strictly $2$-sign consistent. Moreover, assuming that all $g_i \neq 0$, $2$-sign consisteny of $C^g$ is equivalent to $\frac{g_1}{g_2} = \frac{g_2}{g_0} = \frac{g_0}{g_1}$, i.e., all $g_i = \alpha$ for some $\alpha \in \mathds{C}\setminus \{0\}$. 
\end{example}
If $k$ is odd and such that $2T -k \leq T+k-1$, it follows that, up to additionally created zero $k$-minors, the $k$-minors of $T^g(T-k,k,2T-k)$ are identical to those of $T^g(T-k,k,T)$, or equivalently, to those of $C^g_{((1:k),:)}$. Therefore, by application of \cref{cor:toep}, the following equivalence holds.
\begin{cor}\label{cor:circulant}
 Let $g \in \ell_\infty(T)$ and $k \in (1:T)$ be odd such that $2k \geq T+1$. Then, the following are equivalent:
\begin{enumerate}
    \item $C^g$ is $k$-sign consistent.
    \item $C^g_{((1:k),:)}$ is $k$-sign consistent. 
\end{enumerate}
\end{cor}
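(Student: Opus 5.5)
The direction (i)$\Rightarrow$(ii) is immediate, since every $k$-minor of $C^g_{((1:k),:)}$ is a $k$-minor of $C^g$. For (ii)$\Rightarrow$(i) I will use \cref{cor:toep} with $C^g = T^g(0,T,T)$, i.e.\ $t=0$, $M=N=T$. It then suffices to show that the reshaped Toeplitz matrix $B := T^g(T-k,k,2T-k)$ is $k$-sign consistent whenever $C^g_{((1:k),:)}$ is. Strictness is not claimed, which is consistent with the obstruction shown just before the statement.

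\textbf{Step 1 (column structure of $B$).} Column $j$ of $B$ has entries $g_{T-k+i-j}$, $i\in(1:k)$, so it depends only on $j \bmod T$. Hence the first $T$ columns form $T^g(T-k,k,T)$. Up to a cyclic relabelling of indices this is $C^g_{((1:k),:)}$, since $C^g=T^g(0,T,T)$ and $g$ is $T$-periodic. Each column $j+T$, $j\in(1:T-k)$, is a copy of column $j$. The hypothesis $2k\ge T+1$, i.e.\ $2T-k\le T+k-1$, ensures that every column window of $B$ has width less than $T+k$, so each residue class appears at most twice. Any $k$-minor of $B$ using two columns of the same residue class vanishes. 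Every other $k$-minor equals $\pm$ a $k$-minor of $T^g(T-k,k,T)$, obtained by reordering the selected columns by residue.

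\textbf{Step 2 (the sign of the reordering).} This is where I expect the main difficulty, and where oddness of $k$ must enter. Let the selected column indices be $c_1<\dots<c_k$ with distinct residues. The sorting permutation merges the block of indices $\le T$ with the block of reduced indices $c-T$. If the reduced block lies entirely below the other block, the permutation moves a block of size $a$ past a block of size $k-a$. Its sign is $(-1)^{a(k-a)}=+1$, because $a(k-a)$ is even for odd $k$. I would first try to show that this is the only case producing a nonzero minor: an interleaving would force two selected columns into the same residue class, using the width bound from $2k\ge T+1$.

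If that fails, I would fall back on the circulant invariance $C^g = P C^g P^\transp$, with $P$ the cyclic shift. This shows that the $k$-minors of $C^g$ on a cyclic window of rows are invariant under simultaneous cyclic relabelling of rows and columns. A cyclic rotation of $k$ columns has sign $(-1)^{k-1}=+1$ for odd $k$. Combining these, any residue reordering reduces to a cyclic rotation, which again carries sign $+1$.

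\textbf{Step 3 (conclusion).} By Steps 1 and 2, every $k$-minor of $B$ is either zero or equal, with sign $+1$, to a $k$-minor of $C^g_{((1:k),:)}$. Hence (ii) implies that $B$ is $k$-sign consistent, and \cref{cor:toep} gives (i).
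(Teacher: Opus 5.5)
Your reduction is the paper's: it also applies \cref{cor:toep} to $C^g=T^g(0,T,T)$. It then asserts that, up to zeros, the $k$-minors of $B=T^g(T-k,k,2T-k)$ coincide with those of $T^g(T-k,k,T)$. The gap is your Step~2, and it cannot be closed on this route.

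Interleaved column selections with distinct residues do occur once $T\ge k+2$, which is compatible with $2k\ge T+1$. They can carry sign $-1$. For $T=5$, $k=3$ you have $B=T^g(2,3,7)$ with $B_{(:,6)}=B_{(:,1)}$ and $B_{(:,7)}=B_{(:,2)}$. So the columns $\{1,3,7\}$ have residues $1,3,2$, and $|B_{(:,\{1,3,7\})}|=-|B_{(:,(1:3))}|$.

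No width bound excludes this, since every $k$-subset of the columns of $B$ gives a minor. Your fallback fails too: circulant invariance shifts rows and columns simultaneously, whereas here the rows are fixed and the reordering is a transposition.

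Concretely, take $g_2=1$ and $g_0=g_1=g_3=g_4=0$. Then $C^g$ is a cyclic permutation matrix whose $3$-minors all lie in $\{0,1\}$, so (i) and (ii) hold. Yet $B=[e_1,e_2,e_3,0,0,e_1,e_2]$ has the $3$-minors $1$ and $-1$. The same construction with $g_{T-k}=1$ works for every $T\ge k+2$.

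Hence (ii) does not imply $k$-sign consistency of $B$. Your argument, like the paper's one-line justification, is valid only for $T\in\{k,k+1\}$. There only column $1$ is duplicated, and the only possible reordering is a cyclic rotation.

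A partial repair is to avoid applying \cref{cor:toep} to all of $C^g$. First use circulant invariance (sign $+1$ for odd $k$) to move the row set into a window $(1:w)$ of minimal width. Then apply the nonnegative expansion \cref{eq:Hankel_minor_formula}, transferred to Toeplitz matrices by column reversal, to the block $C^g_{((1:w),:)}=T^g(0,w,T)$. Its reshaped matrix $T^g(w-k,k,T+w-k)$ duplicates only $w-k$ columns.

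If $w\le k+1$, your cyclic-rotation argument applies verbatim. Every $k$-minor of $C^g$ is then a nonnegative integer combination of $k$-minors of $C^g_{((1:k),:)}$. Since $T\le k+2$ forces $w\le k+1$, this proves the corollary for $T\le k+2$, in particular for all cases with $k=3$.

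For $k+3\le T\le 2k-1$, row sets whose missing rows are pairwise non-adjacent have $w-k\ge 2$. Odd interleavings then reappear in the reshaped matrix. You would need an additional argument that these terms enter the expansion with zero net coefficient. Neither your proposal nor the paper supplies one.
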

Note that in the cases where \(C^g\) is strictly \(1\)-sign consistent, it has been shown in \cite{grussler2025discrete} that \cref{cor:circulant} is valid for \(k=3\), independently of \(T\).

\section{Conclusion}\label{sec:conculsion}

We have developed a framework for efficiently verifying (strict) $k$-sign consistency of Hankel and Toeplitz matrices --- meaning that all their $k$-th order minors share the same (strict) sign. While our results provide only sufficient conditions in the finite-dimensional matrix setting, they become necessary and sufficient for the corresponding infinite-dimensional operators. This generalizes the classical certificates for $k$-positivity (i.e., nonnegativity of all minors of order up to $k$) of Hankel and Toeplitz matrices \cite{grussler2020variation,karlin1968total,fallat2017total}. It, further, enables the direct use of tools such as \cite{pena_matrices_1995}, which reduce the verification of $k$-sign consistency for a matrix with $k$ rows or columns to a total positivity problem.

Our derivations rely on algebraic combinatorics -- specifically Schur polynomials, Kostka numbers, and Littlewood--Richardson coefficients \cite{stanley2023enumerative} --- to express every $k$-th order minor of a matrix as a nonnegative integer linear combination of intermediate row-consecutive $k$-th order minors. These minors arise from submatrices whose columns are formed from $k$ consecutive row indices, which need not coincide across columns. For Hankel matrices, a structural identity links the signed sums of the associated coefficients (Kostka numbers) to Littlewood--Richardson coefficients, allowing us to replace these row-consecutive minors with standard $k$-th order minors of a reshaped Hankel matrix with $k$ rows. 

Beyond their immediate implications, the connections uncovered here between algebraic combinatorics and total positivity open several promising directions for further research. These include extending analogous decompositions to other structured matrix classes, as well as exploring their consequences in system theory \cite{grussler2020variation,grussler2020balanced,margaliot2018revisiting,weiss2019generalization2} and sparse optimization \cite{marmary2025tractabledownfallbasispursuit}.

\printbibliography

\end{document}